\numberwithin{equation}{section}
\theoremstyle{plain}
\newtheorem{theorem}{Theorem}[section]
\newtheorem{lemma}[theorem]{Lemma}
\newtheorem{proposition}[theorem]{Proposition}
\theoremstyle{definition}
\newtheorem{definition}[theorem]{Definition}
\newtheorem{example}[theorem]{Example}
\newtheorem{hypothesis}[theorem]{Hypothesis}
\newtheorem{notation}[theorem]{Notation}
\newtheorem{remark}[theorem]{Remark}
\newcommand{\aconst}[3]{a_{#1}^{#2 #3}}
\newcommand{\bconst}[2]{b^{#1 #2}}
\newcommand{\id}{\textup{id}}
\newcommand{\Image}{\textup{Im}}
\newcommand{\mc}{\mathcal}
\newcommand{\kk}{\Bbbk}
\newcommand{\kspan}{\textup{span}_{\kk}}
\let\c@equation\c@theorem  
\newcommand{\N}{\mathbb{N}}
\begin{document}

\title[PBW deformations of quadratic monomial algebras]{PBW deformations of \\quadratic monomial algebras}
\author[Z. Cline, A. Estornell, C. Walton, M. Wynne]{Zachary Cline, Andrew Estornell, Chelsea Walton,\\ and Matthew Wynne}

\address{Cline: Department of Mathematics, Temple University, Philadelphia, Pennsylvania 19122, USA}
\email{zcline@temple.edu}

\address{Estornell: Department of Computer Science \& Engineering, Washington University, Saint Louis, Missouri 63110, USA}
\email{aestornell@wustl.edu}

\address{Walton: Department of Mathematics, The University of Illinois at Urbana-Champaign, Urbana, Illinois 61801, USA}
\email{notlaw@illinois.edu}

\address{Wynne: Department of Mathematics, Temple University, Philadelphia, Pennsylvania 19122, USA}
\email{matthew@temple.edu}

\bibliographystyle{abbrv}

\begin{abstract}
A result of Braverman and Gaitsgory from 1996 gives necessary and sufficient conditions for a filtered algebra to be a Poincar\'e-Birkhoff-Witt (PBW) deformation of a Koszul algebra. 
The main theorem in this paper establishes conditions equivalent to the Braverman-Gaitsgory Theorem to efficiently determine PBW deformations of quadratic monomial algebras. 
In particular, a graphical interpretation is presented for this result, and we discuss circumstances under which some of the conditions of this theorem need not be checked. Several examples are also provided.
Finally, with these tools, we show that each quadratic monomial algebra admits a nontrivial PBW deformation.
\end{abstract}

\subjclass[2010]{16S80, 16S37, 16W70, 05C20}

\keywords{Poincar\'e-Birkhoff-Witt deformation, monomial algebra, quadratic algebra, directed graph}

\maketitle


\section{Introduction}

Let $\kk$ denote a field throughout, and note that all algebraic structures in this work will be over $\kk$.
The goal of this article is to study \emph{filtered deformations} of  quadratic monomial algebras in the following sense. Consider the quadratic monomial algebra $A:=T(V)/(R)$, where $V$ is a finite-dimensional $\kk$-vector space with basis $x_1, \ldots, x_n$, and $(R)$ is the two-sided ideal of the tensor algebra $T(V)$ generated by the span of a finite set of simple tensors  $R = \{x_i \otimes x_j\}_{i,j}$ in $V \otimes V$. 
For linear maps $\alpha: R \to V$ and $\beta: R \to \kk$, let 
\[
  P = \{r - \alpha(r) - \beta(r) : r \in R\},
\]
and take $D$ to be the filtered algebra $T(V) / (P)$.
Our aim is to present user-friendly necessary and sufficient conditions for the associated graded algebra of $D$ to be isomorphic to $A$ as graded algebras, or by definition, such conditions so that  $D$ is a {\it Poincar\'e-Birkhoff-Witt (PBW) deformation} of $A$ [Definition~\ref{def:PBW}].

In general, PBW deformations of graded algebras are useful because many ring-theoretic and homological properties are preserved under this construction.
For example, if $A$ is an integral domain, prime ring, or Noetherian ring, then so is any PBW deformation (see, e.g., \cite[Section~1.6]{MR}).
The homological Calabi-Yau property under PBW deformations has also been studied in \cite{BT} and \cite{WZ}. Moreover, for
 a survey of recent results on {\it PBW theorems} and methods, see \cite{SW}.

The primary tool used in this work is a fundamental PBW theorem due to  Braverman and Gaitsgory, \cite[Theorem~4.1]{BG}, restated in Theorem~\ref{thm:BG}, which provides necessary and sufficient conditions  for a certain filtered algebra to be a PBW deformation of a Koszul algebra (see Definition~\ref{def:koszul}).
Since all quadratic monomial algebras are Koszul [Proposition~\ref{prop:quadKoszul}], the Braverman-Gaitsgory Theorem applies for our work here.

For our main result, Theorem~\ref{thm:main}, we introduce scalars $\aconst m i j$ and $\bconst i j$ in $\kk$, determined by the equations
\[
  \alpha(x_i \otimes x_j) = \sum_{m=1}^n \aconst m i j x_m, \quad \quad
    \beta(x_i \otimes x_j) = \bconst i j, \quad \text{ for $x_i \otimes x_j \in R$}.
\]
We then apply the theorem of Braverman and Gaitsgory mentioned above to determine conditions on the scalars $\aconst m i j$ and $\bconst i j$, which we label (I, II, III), that are necessary and sufficient for  $D$ to be a PBW deformation of  $A$ as above.

For instance, we have by work of Berger \cite[Proposition~6.1]{berger}: when $R$ is the span of $x_i \otimes x_i$ for some given $i$, then ${D = T(V)/(P)}$ is a PBW deformation of $A = T(V)/(R)$ if and only if $(\alpha+\beta)(x_i \otimes x_i)$ is a polynomial in $x_i$ (i.e., when $a^{ii}_m = 0$ for all $m \neq i$ and $\aconst i i i, \bconst i i$ are free). 
This fact can be easily recovered with our main result Theorem~\ref{thm:main}-- see Example~\ref{ex:loops}.

To contrast with other results in the literature, note that the monomial algebras that have appeared often in the representation theory of finite-dimensional algebras (also known as {\it zero relation algebras}) are somewhat different than the monomial algebras that we study here, which could be infinite-dimensional. Both algebras arise as quotients of path algebras of a quiver by an ideal of monomial relations;  the quiver is typically acyclic in the former case, whereas the quiver here consists of $n$ loops. In the former case there is a classification of the monomial algebras that are rigid in the sense that they do not admit nontrivial (PBW) deformations, due to work of Cibils \cite[Theorem~3.12]{cibils}. On the other hand, we establish via  the framework of Theorem~\ref{thm:main} that  quadratic monomial algebras in the form of $A$ above are never rigid: see Theorem~\ref{thm:nontrivialdefs}. 

\smallskip
This paper is organized as follows. In Section~\ref{sect:background}, we discuss PBW deformations more precisely and provide the theorem of Braverman and Gaitsgory mentioned above.
In Section~\ref{sect:main}, we discuss further monomial algebras in our context, attach a  graph $\Gamma(A)$ to these algebras which depicts their relation space, and we also prove our main result, Theorem~\ref{thm:main}.
The graph $\Gamma(A)$ is used in subsequent sections to help execute Theorem~\ref{thm:main}.
Section~\ref{sect:cases} is devoted to examining special cases when conditions  (II) and (III) of Theorem~\ref{thm:main} need not be checked (after assuming condition (I) holds).
We apply Theorem~\ref{thm:main} to several examples in Section~\ref{sect:examples} to illustrate its usefulness.
Finally, in Section~\ref{sect:nontrivialdefs}, we use Theorem~\ref{thm:main} to show that each quadratic monomial algebra admits a nontrivial PBW deformation.


\section{Background material} \label{sect:background}


In this section, we first recall facts about  graded and filtered $\kk$-algebras, 
the  associated graded algebra corresponding to a filtered algebra $D$, along with   PBW deformations of graded algebras.
We end the section by recalling a theorem of Braverman and Gaitsgory \cite{BG} [Theorem~\ref{thm:BG}] that provides necessary and sufficient conditions for a quadratic (filtered) algebra to be a PBW deformation of a  Koszul algebra [Definition~\ref{def:koszul}]. The Braverman-Gaitsgory Theorem  will be used in subsequent sections to study PBW deformations of quadratic monomial algebras as such algebras are Koszul  (\cite[Corollary~2.4.3]{PP}, restated in Proposition~\ref{prop:quadKoszul}).

\subsection{Graded and filtered algebras} 
  We begin with a discussion of graded and filtered $\kk$-algebras.
 An \emph{$\N$-graded  $\kk$-algebra} $A$ is an associative $\kk$-algebra together with a decomposition of $A$ into $\kk$-subspaces
 $
      A = \bigoplus_{i \geq 0} A_i
    $
    so that $A_i \cdot A_j \subseteq A_{i+j}$ for any $i,j \in \N$. The subspace $A_i$ is referred to as the \emph{degree $i$ part} of $A$.

For example, let $V$ be a vector space over $\kk$ and consider $\kk$-vector spaces $T_0 := \kk$ and 
 $T_i := V^{\otimes i}$ for $i > 0$.
   Then, the {\it tensor algebra} $T(V)$ on $V$, which is  the $\kk$-vector space $\bigoplus_{i=0}^\infty T_i$ with 
    multiplication given by concatenation, is an $\mathbb{N}$-graded $\kk$-algebra.
    If $V$ is $n$-dimensional, say with basis $x_1, \ldots, x_n$, then $T(V)$ is isomorphic to the free algebra $\kk\langle x_1, \ldots, x_n \rangle$. Moreover, let $R$ be a subset of $T(V)$.
    If every element of $R$ is homogeneous of the same degree, that is, $R \subseteq T(V)_i$ for some $i$, then $T(V) / (R)$ will inherit the grading from $T(V)$ above.
  
\medskip 

\noindent {\bf Convention.}  Henceforth, we will often suppress $\otimes$ when expressing elements of the tensor algebra $T(V)$ or quotients thereof.

\medskip

An \emph{$\N$-filtered $\kk$-algebra} $D$ ({\it with increasing filtration}) is an associative $\kk$-algebra together with an increasing sequence of subspaces of $D$
    \[
      \{0\} \subseteq \mc F^0(D) \subseteq \mc F^1(D) \subseteq \cdots \subseteq \mc F^i(D) \subseteq \cdots \subseteq \bigcup\limits_{i=0}^{\infty} \mc F^i(D) = D
    \]
so that $\mc F^i(D) \cdot \mc F^j(D) \subseteq \mc F^{i+j}(D)$ for any $i,j \in \N$.
Moreover, if $D = \bigcup_{i=0}^\infty \mc F^i(D)$ is a filtered algebra, then the \emph{associated graded algebra}, $\text{gr}_{\mc F}(D)$, is the graded algebra
    \[
      \text{gr}_{\mc F}(D) = \bigoplus_{i=0}^\infty \mc F^i(D) / \mc F^{i-1}(D),
    \] 
    where we take $\mc F^{-1}(D) = \{0\}$ by convention.
    The multiplication is given on cosets by 
      [$x$] [$y$] = [$xy$]
    for $x \in \mc F^i(D)$ and $y \in  \mc F^j(D)$, and is
 well-defined.

In this work, the filtered algebra $D$ will be a quotient of the tensor algebra $T(V)$, with filtration inherited from $T(V)$, that is, $\mathcal{F}^j(T(V)) = \bigoplus_{i=0}^j V^{\otimes i}$. So, the filtration $\mathcal{F}$ on $D$ is understood as such.

  \begin{definition} \label{def:PBW}
    Given a graded algebra $A$ and a  filtered algebra $D$ with filtration $\mc F$, we say that $(D, \mc{F})$ is a \emph{Poincar\'e-Birkhoff-Witt (PBW)  deformation} of a graded algebra $A$ if $\text{gr}_{\mc F}(D) \cong A$ as graded algebras.
  \end{definition}


\subsection{The Braverman-Gaitsgory Theorem}
Here, we discuss a result of Braverman and Gaitsgory \cite{BG} which enables us to compute the PBW deformations of certain quadratic algebras, namely of Koszul algebras which are defined below.

\begin{definition} \label{def:koszul}
Let $A = \bigoplus_{i \geq 0} A_i$ be an $\mathbb{N}$-graded
$\kk$-algebra with $A_0 = \kk$.

\begin{enumerate}
\item The left $A$-module $A/\bigoplus_{i \geq 1} A_i \cong \kk$ is referred to as the {\it trivial $A$-module}.

\item Let $M = \bigoplus_{n \in \mathbb{Z}} M_n$ be a $\mathbb{Z}$-graded $A$-module so that $A_i \cdot M_n \subset M_{i+n}$ for all $i, n$. Take $\ell \in
\mathbb{Z}$. Then the {\it shift} of $M$ by $\ell$ is an $A$-module $M(\ell)$ whose degree $n$ part is $M(\ell)_n = M_{n+\ell}$.

\item We say that $A$ is {\it Koszul} if there exists a long exact sequence
$$ \cdots \longrightarrow A(-3)^{\oplus b_3} \longrightarrow A(-2)^{\oplus b_2} \longrightarrow A(-1)^{\oplus b_1}
\longrightarrow A \longrightarrow \kk \longrightarrow 0,$$
where $A(-m)$ is the shift of the regular $A$-module $A$ by the integer $-m$, and  each $b_j$ is some non-negative integer.
\end{enumerate}
\end{definition}

In the setting of part (c) above, we also say that the trivial $A$-module $\kk$ admits a linear minimal graded resolution by free $A$-modules. See notes of Kr\"{a}hmer \cite{Kr} for more information. Moreover, Koszul algebras are necessarily graded and quadratic (see, e.g., \cite[Section~2.1]{PP}), and a class of examples of such graded algebras are given as follows.

\begin{proposition} \label{prop:quadKoszul} \cite[Corollary~2.4.3]{PP}
 A quadratic monomial algebra is Koszul. \qed
\end{proposition}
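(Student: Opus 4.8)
The plan is to deduce Koszulity from the distributive-lattice criterion for quadratic algebras, which is the substance of the machinery underlying the cited source \cite{PP}. Recall that for a quadratic algebra $A = T(V)/(R)$ with $R \subseteq V \otimes V$, Koszulity is equivalent to the following condition: for every $n \geq 2$, the subspaces
\[
  R_{i} := V^{\otimes (i-1)} \otimes R \otimes V^{\otimes (n-i-1)} \subseteq V^{\otimes n}, \qquad i = 1, \ldots, n-1,
\]
generate a \emph{distributive} lattice of subspaces of $V^{\otimes n}$, under the operations of subspace sum and intersection. First I would record this criterion and thereby reduce the proposition to verifying distributivity of these lattices in the monomial case.

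Next I would exploit the defining feature of monomial relations. Since $R = \kspan\{x_i \otimes x_j\}$ is spanned by a subset of the standard monomial basis of $V \otimes V$, each $R_{i}$ above is spanned by a subset of the standard monomial basis $\{x_{k_1} \otimes \cdots \otimes x_{k_n}\}$ of $V^{\otimes n}$, namely those words for which the consecutive factor $x_{k_i} \otimes x_{k_{i+1}}$ lies in $R$. Subspaces of this form are \emph{coordinate subspaces}: each is the $\kk$-span of a subset of a fixed basis. The key structural point is that any family of coordinate subspaces of a fixed vector space generates a distributive lattice. Indeed, if $U = \kspan(S)$ and $W = \kspan(T)$ for subsets $S, T$ of the basis, then $U + W = \kspan(S \cup T)$ and $U \cap W = \kspan(S \cap T)$, so the lattice generated by $R_1, \ldots, R_{n-1}$ is isomorphic to the sublattice of the power set of the monomial basis generated by the corresponding index sets; being a sublattice of a power-set lattice, it is distributive.

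Combining these steps yields the result: for every $n$ the lattice generated by $R_1, \ldots, R_{n-1}$ inside $V^{\otimes n}$ is a lattice of coordinate subspaces, hence distributive, so $A$ is Koszul. I expect the only genuine content to lie in invoking the equivalence between Koszulity and distributivity, which is where the cited theory does its work; once that tool is in hand, the verification for monomial relations is elementary. As a self-contained alternative one could instead build the generalized Koszul complex
\[
  \cdots \longrightarrow A \otimes K_m \longrightarrow \cdots \longrightarrow A \otimes K_1 \longrightarrow A \otimes K_0 \longrightarrow \kk \longrightarrow 0,
\]
where $K_m = \bigcap_{i=0}^{m-2} V^{\otimes i} \otimes R \otimes V^{\otimes (m-2-i)}$ is spanned by the ``chains of relations'' of length $m$, and check its exactness directly. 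On that route the main obstacle would be producing an explicit contracting homotopy; the coordinate-subspace bookkeeping makes this tractable but combinatorially tedious, which is precisely why I favor the distributivity argument.
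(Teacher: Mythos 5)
Your proof is correct and is essentially the argument behind the paper's citation: the paper offers no proof of its own, deferring to \cite[Corollary~2.4.3]{PP}, which establishes Koszulity of monomial quadratic algebras exactly via Backelin's distributivity criterion together with the observation that the subspaces $V^{\otimes(i-1)}\otimes R\otimes V^{\otimes(n-i-1)}$ are spans of subsets of the monomial basis and hence generate a sublattice of a Boolean lattice. So you have faithfully reconstructed the cited proof rather than found a different route.
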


Now we present the Braverman-Gaitsgory Theorem.

\begin{theorem}[{\cite[Theorems~0.5 and~4.1]{BG}}] \label{thm:BG}
  Let $A = T(V) / (R)$ be a (quadratic) Koszul algebra.
  Let $\alpha: R \to V$ and $\beta: R \to \kk$ be $\kk$-linear maps, and define
  \[
    P = \{r - \alpha(r) - \beta(r) : r \in R\}.
  \]
  Then the filtered algebra $D = T(V) / (P)$ is a PBW deformation of $A$ if and only if the following conditions are satisfied:
  \begin{enumerate}[label=\textup{(\arabic*)}]
    \item
      $P \cap \mc F^1(T(V)) = 0$,
    \item
      $\Image(\alpha ~ \id_V - \id_V ~ \alpha) \subseteq R$,
    \item
      $\alpha(\alpha ~ \id_V - \id_V ~ \alpha) = -(\beta ~ \id_V - \id_V ~ \beta)$,
    \item
      $\beta(\alpha ~ \id_V - \id_V ~ \alpha) \equiv 0$.
  \end{enumerate}
  Here, the maps $\alpha ~ \id_V - \id_V ~ \alpha$ and $\beta ~ \id_V - \id_V ~ \beta$ are defined on the $\kk$-vector space $(R \otimes V) \cap (V \otimes R)$. \qed
\end{theorem}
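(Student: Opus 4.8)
The plan is to reduce the defining condition $\mathrm{gr}_{\mc F}(D) \cong A$ to an overlap-consistency (confluence) condition on the rewriting system determined by $P$, and then to separate that single condition by filtration degree into exactly (1)--(4). Throughout, Koszulness of $A$ (available here by Proposition~\ref{prop:quadKoszul}) is what upgrades a degree-$3$ check into a statement in all degrees.

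First I would record that, since each generator $r - \alpha(r) - \beta(r)$ of $P$ has top-degree term $r \in R$, there is always a surjection of graded algebras $A = T(V)/(R) \twoheadrightarrow \mathrm{gr}_{\mc F}(D)$, and the PBW condition is precisely that this surjection is an isomorphism, i.e.\ that the two algebras share a Hilbert series. From this vantage, condition (1), $P \cap \mc F^1(T(V)) = 0$, is immediately seen to be necessary: any nonzero element of $P$ of degree $\le 1$ would descend to an extra low-degree relation in $\mathrm{gr}_{\mc F}(D)$, forcing it to be strictly smaller than $A$. So I may assume (1) and treat $r \mapsto \alpha(r) + \beta(r)$ (for $r \in R$) as a degree-lowering rewriting rule.

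Next I would run the diamond-lemma bookkeeping. Because $R$ is quadratic, the only overlap ambiguities sit in degree $3$ and are parametrized by $(R \otimes V) \cap (V \otimes R) \subseteq V^{\otimes 3}$, which is exactly the domain on which $\alpha\,\id_V - \id_V\,\alpha$ and $\beta\,\id_V - \id_V\,\beta$ are defined. For $w$ in this space I would reduce in the two available ways (rewriting the left quadratic factor versus the right one) and demand that the two outputs agree in $D$. Sorting the discrepancy by filtration degree produces a cascade: the degree-$2$ part is $(\alpha\,\id_V - \id_V\,\alpha)(w)$, which must itself be reducible and hence must lie in $R$ — this is condition (2); reducing it in turn through $\alpha$ and $\beta$ and matching the resulting linear terms against the degree-$1$ discrepancy $(\beta\,\id_V - \id_V\,\beta)(w)$ yields condition (3); and matching the scalar terms yields condition (4). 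This identifies (1)--(4) with the degree-$3$ overlap-consistency conditions and, since uniqueness of normal forms is forced by the PBW property, establishes their necessity.

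The hard part will be sufficiency: showing that consistency on the degree-$3$ overlaps propagates to every degree, so that no new relations appear and the Hilbert series of $D$ and $A$ coincide. For a general quadratic algebra this can fail, and it is exactly here that I would invoke Koszulness. The cleanest route is homological — regard $(\alpha, \beta)$ as deformation data and (2)--(4) as the vanishing of its degree-$3$ obstruction (a Maurer--Cartan/Jacobi-type identity), then use the Koszul bimodule resolution of $A$ to show that the full obstruction space is concentrated in the degree already controlled, as in Braverman and Gaitsgory \cite{BG}. Alternatively, following Polishchuk and Positselski \cite{PP}, one uses the distributivity of the lattice of subspaces of $V^{\otimes m}$ generated by the translates of $R$ — a characterization of Koszulness — to run an induction carrying confluence from degree $3$ up to all $m$. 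I expect this propagation step to be the genuine obstacle of the proof; by contrast, the necessity direction and the explicit matching of (1)--(4) to the degree-$3$ analysis are routine once the filtration bookkeeping is in place.
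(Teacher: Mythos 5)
Your attempt should first be measured against what the paper actually does: the paper does \emph{not} prove this statement. It is the Braverman--Gaitsgory theorem, quoted verbatim from \cite{BG} with a citation and stated without proof (the \qed follows the statement immediately); the paper's own work, Theorem~\ref{thm:main}, is \emph{derived from} it. So there is no in-paper argument for your proposal to match, and the only question is whether your sketch stands on its own as a proof of the quoted result.

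It does not, because the sufficiency direction is circular. Your necessity analysis is essentially right and routine: assuming $D$ is PBW (so that, granting (1), one gets $(P)\cap \mc F^2(T(V)) = P$), the two ways of reducing an element $w$ of $(R\otimes V)\cap(V\otimes R)$ differ by $(\alpha\,\id_V - \id_V\,\alpha)(w) + (\beta\,\id_V - \id_V\,\beta)(w)$, which must lie in $P$; matching this against the general form $r - \alpha(r) - \beta(r)$ of elements of $P$, degree by degree, is exactly (2), (3), (4). But the content of the theorem is the converse: that these degree-$3$ conditions force $\mathrm{gr}_{\mc F}(D)\cong A$ in \emph{all} degrees. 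For that step you write ``as in Braverman and Gaitsgory \cite{BG}'' or defer to the lattice-distributivity induction of \cite{PP}. Citing \cite{BG} to prove the theorem of \cite{BG} is not a proof; it is the same black-box use of the result that the paper itself makes. The genuinely hard work --- identifying the obstruction to flatness in each filtration degree with a class built from $(\alpha,\beta)$ on the Koszul bimodule resolution, and using Koszulness (equivalently, distributivity of the lattice generated by the subspaces $V^{\otimes i}\otimes R\otimes V^{\otimes j}$) to show the degree-$3$ conditions kill every higher obstruction --- is named but never carried out. A secondary caveat: your ``diamond lemma'' framing needs care for general quadratic $R$, which need not be spanned by monomials; Bergman's lemma requires a monomial order and a reduction system, and while $(R\otimes V)\cap(V\otimes R)$ is the correct coordinate-free substitute for the space of overlap ambiguities, the rewriting heuristic by itself is not a proof mechanism in this generality.
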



\section{Main Result} \label{sect:main}

The goal of this section is to provide a method to  efficiently compute all PBW deformations of any given quadratic monomial algebra $A$. We do so by producing Theorem \ref{thm:main}, which provides conditions equivalent to Theorem \ref{thm:BG} when applied to quadratic monomial algebras. 

For a preview of the notation used below, recall that the relation space $R$ of $A$ is given by only quadratic monomials, and our main result can be interpreted easily via a graph $\Gamma(A)$ consisting of vertices and arrows corresponding respectively to generators and relations of $A$. In particular, each path of length three in $\Gamma(A)$ corresponds to an element of a basis $Q$ of the space $(R \otimes V) \cap (V\otimes R)=:RV \cap VR$. Since linear maps on this space determine (the nontrivial) restrictions for a filtered algebra to be a PBW deformation of $A$ (see Theorem~\ref{thm:BG}), we will be able to compute all PBW deformations of $A$ by analyzing length three paths of the graph $\Gamma(A)$.


\subsection{Quadratic monomial algebras} \label{subsect:quadmon}
\begin{hypothesis} \label{hyp:A}($A$).
  From now on,  $A$ will denote a finitely generated quadratic monomial algebra generated in degree one.
  That is, we have 
  \[
    A = \kk \langle x_1, x_2, \ldots, x_n \rangle / (R) ~\cong~ T(V) / (R),
  \]
  where $\deg(x_i) = 1$ for all $i$, $R$ is the span of a finite set of monomials in the $x_i$ of  degree 2, and  $V$ is the $\kk$-vector space with basis $\{ x_1, x_2, \ldots, x_n \}$.
\end{hypothesis}
  
  \begin{notation}[$\Gamma(A)$] \label{notation:gamma(A)}
    For each such algebra $A$, we can associate to it a directed graph $\Gamma(A)$.
    The vertices of $\Gamma(A)$ will be $\{1, 2, \dots, n\}$  corresponding to the chosen basis  $\{x_1, x_2, \ldots, x_n\}$ of $V$.
    For each pair of vertices, $i$ and $j$, let there be a single arrow from $i$ to $j$ if and only if $x_i x_j$ is a monomial in the relation space ($R$).
  \end{notation}
  
    The graph $\Gamma(A)$ defined above is the complement of the \textit{Ufnarovskii graph} $U(A)$ of $A$ defined in \cite{Uf} in the sense that  $\Gamma(A)$ depicts the basis of the relation space of $A$, while the graph $U(A)$ depicts the monomial basis (of nonzero elements) of $A$.
  
  \begin{example} \label{ex:3gens}
    Consider the quadratic monomial algebra
    \[
      A = \kk \langle x_1, x_2, x_3 \rangle / (x_1^2, ~ x_1 x_2, ~ x_2 x_3, ~ x_3^2 ).
    \]
    In this case, we have $\Gamma(A)$ is the following graph:
    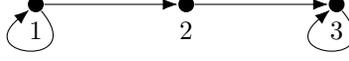
\begin{figure}[h] \label{fig:ex1}
    \begin{center}
      \begin{tikzpicture}
\tikzset{vertex/.style = {shape = circle,fill = black,minimum size = 6pt,inner sep=0pt}}
        \tikzset{edge/.style = {-{Latex[length=2mm]}}}
        \tikzset{loopedge/.style = {-{Latex[length=2mm]}, in=45, out=135, loop}}
        \tikzset{every loop/.style={-{Latex[length=2mm]}, min distance=10mm,in=220,out=310,looseness=1}}
      
        \node[vertex, label=below:$1$] (a) at (0,0) {};
        \node[vertex, label=below:$2$] (b) at (2,0) {};
        \node[vertex, label=below:$3$] (c) at (4,0) {};
        \node[vertex, white] (w) at (2,0.5) {};
        
        \draw[loopedge] (a) to (a);
        \draw[edge] (a) to (b);
        \draw[edge] (b) to (c);
        \draw[loopedge] (c) to (c);
      \end{tikzpicture}
    \end{center}
    \caption{$\Gamma(A)$ for $A = \kk \langle x_1, x_2, x_3 \rangle / (x_1^2, ~ x_1 x_2, ~ x_2 x_3, ~ x_3^2 )$}
   \end{figure}
  \end{example}

 Now since all quadratic monomial algebras are Koszul algebras [Proposition~\ref{prop:quadKoszul}] the Braverman-Gaitsgory Theorem [Theorem~\ref{thm:BG}] applies to $A$ above.
  So, given linear maps $\alpha: R \to V$ and $\beta: R \to \kk$, we define 
  \[
    P = \{ r - \alpha(r) - \beta(r) : r \in R \} \subseteq R \oplus V \oplus \kk,
  \]
  and we let $D = T(V) / (P)$ denote the corresponding filtered algebra. Here, $V = \bigoplus_{m=1}^n \kk x_m$.
  Since $R$ has a basis consisting of monomials of  degree 2, $\alpha$ and $\beta$ are determined by the following scalars:
  \[
    \{ \aconst m i j \in \kk: x_i x_j \in R , \ m=1, 2, \dots, n \} \quad \text{and} \quad
    \{ \bconst i j \in \kk: x_i x_j \in R \},
  \]
  which are respectively defined by 
  \[ 
    \alpha(x_i x_j) = \sum_{m = 1}^n \aconst m i j x_m \quad \text{and} \quad
    \beta(x_i x_j) = \bconst i j.
  \]
  
  \begin{definition}[$\aconst m i j$, $\bconst i j$] \label{def:defparams}
    Retain the notation above.
    \begin{enumerate}
   \item We refer to the scalars $\aconst m i j$ and $\bconst i j$ as the \emph{filtration parameters} for the filtered algebra $D= T(V) / (P)$ associated to the graded algebra $A$ in Hypothesis~\ref{hyp:A}. 
 \item   If, further, $D$ is a $PBW$ deformation of $A$, then we refer to $\aconst m i j$ and $\bconst i j$  as  \emph{deformation parameters} of $D$ which deform $A$.
    \end{enumerate}
  \end{definition}

  In the next section, we use Theorem~\ref{thm:BG} to determine explicit necessary and sufficient conditions on the filtration parameters of $D$ so that 
they are deformation parameters of $D$.


\subsection{Braverman-Gaitsgory Theorem for quadratic monomial algebras} 
Recall the role of the space $RV \cap VR$ in Theorem~\ref{thm:BG}.
The following lemma gives a simple way of producing a basis of $RV \cap VR$. This basis, which we will denote by $Q$, represents all paths of length three in the graph $\Gamma(A)$ from Notation~\ref{notation:gamma(A)}.

  \begin{lemma}[$Q$] \label{lem:RVVRbasis}
    Let $Q := \{ x_i  x_j  x_k : x_i  x_j, \ x_j  x_k \in R \}$.
    Then $Q$ is a basis for the space $RV \cap VR$. As a consequence, $x_ix_jx_k \in Q$ if and only if $i\rightarrow j\rightarrow k$ is a path contained in $\Gamma(A)$; note that $i,j,k$ need not be distinct. 
  \end{lemma}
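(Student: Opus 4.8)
The plan is to exhibit both $RV$ and $VR$ as \emph{coordinate subspaces} of $V^{\otimes 3}$ with respect to its standard monomial basis, and then to invoke the elementary fact that the intersection of two coordinate subspaces is again coordinate, spanned exactly by the common basis vectors.

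First I would recall that $V^{\otimes 3}$ has basis $\{x_i x_j x_k : 1 \le i,j,k \le n\}$ (the degree-three monomials), and that by Hypothesis~\ref{hyp:A} the relation space $R$ is the span of a \emph{subset} of the degree-two monomials, say $R = \kspan\{x_i x_j : (i,j) \in S\}$, where $S$ records precisely the arrows of $\Gamma(A)$. The crucial structural point is that $R$ is a coordinate subspace of $V^{\otimes 2}$. Consequently $RV = R \otimes V$ is spanned by $\{x_i x_j x_k : (i,j) \in S,\ 1 \le k \le n\}$ and $VR = V \otimes R$ is spanned by $\{x_i x_j x_k : 1 \le i \le n,\ (j,k) \in S\}$; in each case the spanning set is itself a subset of the monomial basis of $V^{\otimes 3}$, hence is a basis of the subspace in question.

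Next I would apply the linear-algebra lemma: if $W$ has basis $B$ and $W_1 = \kspan(B_1)$, $W_2 = \kspan(B_2)$ for subsets $B_1, B_2 \subseteq B$, then $W_1 \cap W_2 = \kspan(B_1 \cap B_2)$. The inclusion $\supseteq$ is immediate, and for $\subseteq$ one writes any $v \in W_1 \cap W_2$ uniquely in the basis $B$ and observes that membership in $W_1$ forces its support into $B_1$ while membership in $W_2$ forces it into $B_2$, hence into $B_1 \cap B_2$. Taking $B$ to be the monomial basis of $V^{\otimes 3}$, $B_1$ the spanning set of $RV$, and $B_2$ that of $VR$ identified above, the intersection of spanning sets is exactly $\{x_i x_j x_k : (i,j) \in S \text{ and } (j,k) \in S\} = Q$. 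Since $Q$ is a subset of the monomial basis it is automatically linearly independent, so $Q$ is a basis of $RV \cap VR$.

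Finally, the stated consequence is a direct translation of definitions: by Notation~\ref{notation:gamma(A)}, $(i,j) \in S$ means precisely that $i \to j$ is an arrow of $\Gamma(A)$, so $x_i x_j x_k \in Q$ if and only if both $i \to j$ and $j \to k$ are arrows, i.e.\ if and only if $i \to j \to k$ is a length-three path, with the indices $i,j,k$ allowed to repeat since the construction imposes no distinctness requirement. I do not anticipate a genuine obstacle: the whole argument rests on $R$ being a monomial (coordinate) subspace, which is exactly the hypothesis that fails for general quadratic algebras and that makes the monomial case tractable. The only point requiring minor care is keeping the three ``free index / arrow'' roles straight when writing down the spanning sets of $RV$ and $VR$.
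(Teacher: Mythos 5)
Your proposal is correct and is essentially the paper's own argument: both proofs rest on comparing coefficients with respect to the monomial basis of $V^{\otimes 3}$, which is possible precisely because $R$ is spanned by monomials. The only difference is organizational --- you abstract the coefficient comparison into a general lemma about intersections of coordinate subspaces, whereas the paper performs the same comparison inline, writing an arbitrary element of $RV \cap VR$ in the two forms $\sum_i x_{i_1} x_{i_2} v_i = \sum_i w_i x_{i_1} x_{i_2}$ and concluding that any monomial with nonzero coefficient must lie in $Q$.
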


    \begin{proof}
      It is clear that $Q \subseteq RV \cap VR$.
      Now suppose $R = \kspan (\{x_{i_1} x_{i_2} \}_{i=1}^t)$ and take an arbitrary element of $RV \cap VR$,
      \[
        \sum_{i=1}^t x_{i_1}  x_{i_2}  v_i 
        = \sum_{i=1}^t w_i x_{i_1}  x_{i_2},
      \]
      with $v_i, w_i \in V$.
      Write $v_i = \sum_{s = 1}^n \lambda_{i,s} x_s$ and $w_i = \sum_{s = 1}^n \mu_{i,s} x_s$, for $\lambda_{i,s}, \mu_{i,s} \in \kk$.
      Then we have
      \[
        \sum_{s=1}^n \sum_{i=1}^t \lambda_{i,s} x_{i_1}  x_{i_2}  x_s 
        = \sum_{s=1}^n \sum_{i=1}^t \mu_{i,s} x_s  x_{i_1} x_{i_2}.
      \]
      Note that $\{x_a  x_b  x_c\}_{1 \leq a,b,c \leq n}$ is a basis for $V^{\otimes 3}$.
      We conclude that 
      if the coefficient of some $x_a x_b  x_c$ is nonzero, then we must have $x_a x_b \in R$ and $x_b  x_c \in R$.
      Thus, the element $x_a  x_b  x_c$ is in $Q$, so $Q$ spans $RV \cap  VR$.
      The linear independence of $Q$ follows from the basis of $V^{\otimes 3}$ given above.
    \end{proof}
  
The following theorem provides  conditions equivalent to conditions (1)-(4) of Theorem \ref{thm:BG} when applied to quadratic monomial algebras. Recall that the maps $\alpha$ and $\beta$ are given by the filtration parameters $\aconst * * *$ and $\bconst * *$ respectively. Subject to certain restrictions, these filtration parameters are deformation parameters which determine the relation space $P$ such that $D = T(V) / (P)$ is a PBW deformation of $A = T(V) / (R)$. These restrictions can be defined exclusively on elements of $Q$. Moreover, elements of the relation space $R$ (which determines $Q$) can be interpreted as arrows of the graph $\Gamma(A)$. As a result, the conditions in Theorem \ref{thm:main} can be visualized using $\Gamma(A)$ as we will see below.
  
  \begin{theorem}[$d_r^{ijk}$] \label{thm:main}
    With $A$, $R$, $P$, and $D$ as in Section~\ref{subsect:quadmon} and $Q$ as in Lemma~\ref{lem:RVVRbasis}, the filtered algebra $D$ is a PBW deformation of the quadratic monomial algebra $A$ 
    (i.e. the filtration parameters $\aconst m i j$ and $\bconst i j$ are deformation parameters) if and only if the following conditions hold for each element $x_i x_j x_k$ of $Q$.
    \begin{enumerate}[label=\textup{(\Roman*)}]
      \item We have the conditions below (as illustrated in Figure~2):
        \begin{enumerate}[label=\textup{(\alph*)}]
           \item
            $\aconst m i j = 0$ for every $m \neq i$ such that $x_m x_k \not \in R$,
            
            \smallskip
            
          \item
            $\aconst {m'} j k = 0$ for every $m' \neq k$ such that $x_i x_{m'} \not \in R$,
            
             \smallskip
             
          \item
            $\aconst i i j = \aconst k j k$ if $x_i x_k \not \in R$.
        \end{enumerate}
        \medskip
      \item
        For each $r \in \{1, \ldots, n\}$, let
        \[
          d_r^{ijk} := \sum_{ \substack{m \in \{1, \ldots, n\} \\ x_m x_k \in R}} \aconst m i j \aconst r m k \ - \sum_{\substack{m' \in \{1, \ldots, n\} \\ x_i x_{m'} \in R}} \aconst r i {m'} \aconst {m'} j k .
        \]
        \begin{enumerate}[label = \textup{(\alph*)}]
          \item
            If $i \not = k$, then $d_r^{ijk} = 
            \begin{cases}
              0, & \text{ if } r \neq i, k \\
              - \bconst i j, & \text{ if } r = k \\
              \bconst j k, & \text{ if } r = i .
            \end{cases}$
          \item
             If $i = k$, then $d_r^{ijk} = 
            \begin{cases}
              0, & \text{ if } r \neq i \\
              \bconst j i - \bconst i j, & \text{ if } r = i.
            \end{cases} $
        \end{enumerate}
        
      \item
        Moreover,	
        \begin{equation*}
          \sum_{ \substack{m \in \{1, \ldots, n\} \\ x_m x_k \in R}} \aconst m i j \bconst m k  \ - \sum_{\substack{m' \in \{1, \ldots, n\} \\ x_i x_{m'} \in R}} \bconst i {m'} \aconst {m'} j k = 0.
        \end{equation*}
    \end{enumerate}
  \end{theorem}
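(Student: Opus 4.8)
The plan is to translate each of the four conditions of the Braverman--Gaitsgory Theorem [Theorem~\ref{thm:BG}] into explicit statements about the filtration parameters $\aconst m i j$ and $\bconst i j$, by evaluating the two relevant maps on the basis $Q$ of $RV \cap VR$ supplied by Lemma~\ref{lem:RVVRbasis}. First I would dispose of condition (1): since every element of $P$ has the form $r - \alpha(r) - \beta(r)$ with $r \in R$ homogeneous of degree two, its image in $\mc F^2(T(V))/\mc F^1(T(V)) = V^{\otimes 2}$ is exactly $r$, so such an element lies in $\mc F^1(T(V))$ only when $r = 0$; hence $P \cap \mc F^1(T(V)) = 0$ holds automatically and needs no counterpart in Theorem~\ref{thm:main}. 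The remaining conditions (2)--(4) are governed by $\alpha\,\id_V - \id_V\,\alpha$ and $\beta\,\id_V - \id_V\,\beta$ on $RV\cap VR$, and since $Q$ is a basis it suffices, by $\kk$-linearity, to evaluate everything on a single basis element $x_i x_j x_k \in Q$ (so $x_i x_j, x_j x_k \in R$). Reading $x_ix_jx_k$ as $(x_ix_j)\otimes x_k \in R\otimes V$ for the first summand and as $x_i\otimes(x_jx_k)\in V\otimes R$ for the second, a direct computation gives
\[
(\alpha\,\id_V - \id_V\,\alpha)(x_ix_jx_k) = \sum_{m=1}^n \aconst m i j\, x_m x_k \ - \ \sum_{m'=1}^n \aconst {m'} j k\, x_i x_{m'} \ \in \ V^{\otimes 2}.
\]

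Next I would show that condition (2), $\Image(\alpha\,\id_V - \id_V\,\alpha)\subseteq R$, is equivalent to condition (I). The idea is to collect the coefficient of each degree-two monomial in the displayed element and demand that every monomial lying outside $R$ have zero coefficient. The first sum contributes to monomials $x_m x_k$ and the second to monomials $x_i x_{m'}$; these two families overlap only in the monomial $x_i x_k$, whose total coefficient is $\aconst i i j - \aconst k j k$. Treating the generic case $i \neq k$ first, the monomials $x_m x_k$ with $m \neq i$ and $x_i x_{m'}$ with $m' \neq k$ are pairwise distinct, so requiring those outside $R$ to vanish yields precisely (I)(a) and (I)(b), while the overlap monomial $x_ix_k$ forces $\aconst i i j = \aconst k j k$ whenever $x_ix_k\notin R$, which is (I)(c). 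I would then record the degenerate case $i = k$ separately: here the families overlap in $x_ix_i$, and the same bookkeeping reproduces (I)(a)--(c) with $k$ replaced by $i$.

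Finally, assuming (I) holds so that the image above lies in $R$, I would apply $\alpha$ and $\beta$ termwise. Using $\alpha(x_ax_b) = \sum_r \aconst r a b\, x_r$ gives $\alpha(\alpha\,\id_V - \id_V\,\alpha)(x_ix_jx_k) = \sum_{r=1}^n d_r^{ijk}\, x_r$, where $d_r^{ijk}$ is exactly the scalar defined in the statement. On the other side, $(\beta\,\id_V - \id_V\,\beta)(x_ix_jx_k) = \bconst i j\, x_k - \bconst j k\, x_i$, so condition (3) reads $\sum_r d_r^{ijk}x_r = \bconst j k\, x_i - \bconst i j\, x_k$; comparing coefficients of each $x_r$ and again splitting on $i\neq k$ versus $i=k$ yields exactly (II)(a) and (II)(b). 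Applying $\beta$ instead, with $\beta(x_ax_b) = \bconst a b$, turns condition (4) into the single scalar equation of (III). Assembling these equivalences — (I)$\iff$(2), and, under (I), (II)$\iff$(3) and (III)$\iff$(4), together with (1) being automatic — shows that (I), (II), (III) holding for every element of $Q$ is equivalent to all of (1)--(4), hence to $D$ being a PBW deformation of $A$.

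The step I expect to be most delicate is the coefficient bookkeeping for condition (2): one must carefully track the single monomial ($x_ix_k$ when $i\neq k$, or $x_ix_i$ when $i=k$) that receives a contribution from both sums, since it is this overlap that produces the equality condition (I)(c) rather than a pair of vanishing conditions. The case distinction $i=k$ versus $i\neq k$ threads through conditions (I)--(II) and must be handled consistently; I would also take care to justify that verifying each condition on $Q$ element-by-element is genuinely equivalent to the global containment and identities of Theorem~\ref{thm:BG}, which follows from $Q$ being a basis and all maps involved being $\kk$-linear.
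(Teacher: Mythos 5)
Your proposal is correct and takes essentially the same route as the paper's proof: it disposes of Braverman--Gaitsgory condition (1) automatically, then establishes (2)$\iff$(I), and, assuming (I) so that $\alpha$ and $\beta$ may legitimately be applied to the image of $\alpha\,\id_V - \id_V\,\alpha$, establishes (3)$\iff$(II) and (4)$\iff$(III) by evaluating on the basis $Q$ and comparing coefficients. Your explicit bookkeeping of the overlap monomial $x_ix_k$ (and the separate $i=k$ case) simply spells out what the paper compresses into ``it is now clear that condition (I) is equivalent to condition (2).''
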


 \begin{figure}[h]
    \label{fig:visualization}
    \begin{center}
      \begin{tikzpicture}
        \tikzset{vertex/.style = {shape = circle,fill = black,minimum size = 6pt,inner sep=0pt}}
        \tikzset{edge/.style = {-{Latex[length=2mm]}}}
        \tikzset{loopedge/.style = {-{Latex[length=2mm]}, in=45, out=135, loop}}
        \tikzset{every loop/.style={-{Latex[length=2mm]}, min distance=20mm,in=220,out=310,looseness=1}}
      
        \node[vertex, label=below:$i$] (a) at (0,0) {};
        \node[vertex, label=below:$j$] (b) at (3,0) {};
        \node[vertex, label=below:$k$] (c) at (6,0) {};
        \node[vertex, label={$m' \neq j ,k$}] (d) at (1.25,2) {};
        \node[vertex, label={$m \neq i, j$}] (e) at (4.75,2) {};
        
        \draw[edge] (a) to (b);
        \draw[edge] (b) to (c);
        \draw[edge, bend right, dashed] (a) to node[below]{ (c): $\aconst i i j = \aconst k j k$ }(c) ;
        \draw[edge, dashed] (a) to node[left]{ (b): $ \aconst {m'} j k = 0 ~$} (d);
        \draw[edge, dashed] (e) to node[right]{ ~(a): $ \aconst m i j = 0 $} (c);
      \end{tikzpicture}
    \end{center}
    \caption{Depiction of condition (I) of Theorem~\ref{thm:main} in terms of the graph $\Gamma(A)$, with $*\dashrightarrow\circ$ denoting $x_*x_\circ \not\in R$.}
  \end{figure}
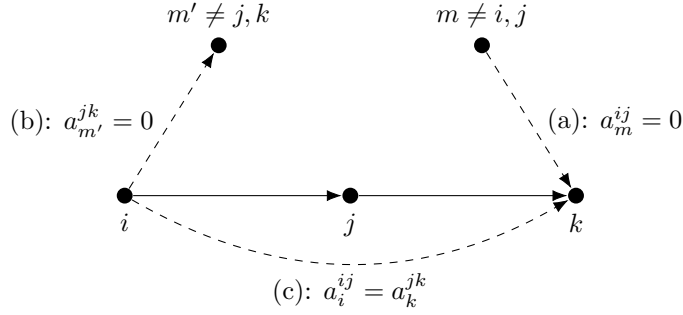

  \begin{proof}
    We will show first that condition (1) of Theorem~\ref{thm:BG} holds for all quadratic monomial algebras. 
    We will then proceed to prove that condition (2) of Theorem~\ref{thm:BG} is equivalent to (I) in the case of quadratic monomial algebras. Finally, under condition (I), we prove that conditions (3) and (4) of Theorem~\ref{thm:BG} are equivalent to conditions (II) and (III), respectively.
    
    Recall that condition (1) of Theorem~\ref{thm:BG} requires that
    \[
      P \cap \mc F^1(T(V)) = {0}. \\
    \]
    Notice that $P$ has a basis consisting of degree 2 terms, each with a single distinct leading degree 2 monomial. Therefore, every element of $P$ can be expressed as
\begin{gather*}
\sum_{ \substack {x_i x_j \in R}} c_{ij} (x_i x_j -\aconst 1 i j x_1 - \dots -\aconst n i j x_n - \bconst i j)
\end{gather*}
for some constants $c_{ij} \in \kk$. Now the degree of each element of $P$ is determined by the values of $c_{ij}$. Namely, if any $c_{ij} \neq 0$, then the above element has degree 2, and if each $c_{ij} = 0$, then it has degree 0.
We obtain that each polynomial in $P$ must necessarily be of degree 2 or 0. Therefore any nonzero element of $P$ cannot be in $\mc F^1(T(V))$.
    
    To see that (I) is equivalent to condition (2) of Theorem~\ref{thm:BG}, say that the filtration parameters corresponding to the map $\alpha$ as in Definition~\ref{def:defparams}
     are given by
    \[
      \alpha(x_i x_j) = \sum_{m = 1}^n \aconst m i j x_m \quad \text{and} \quad \alpha(x_j x_k) = \sum_{m' = 1}^n \aconst {m'} j k x_{m'}. 
    \]
    Therefore, for $x_i x_j x_k \in Q$, we have
    \begin{align*}
      (\alpha~\id_V - \id_V~\alpha)(x_i x_j x_k)
        &= \sum_{m = 1}^n \aconst m i j x_mx_k  -  \sum_{m' = 1}^n \aconst{ m'} j kx_i x_{m'} \\
        &= \sum_{m \neq i} \aconst m i j x_m x_k - \sum_{m' \neq k} \aconst {m'} j k x_i x_{m'} + (\aconst i i j - \aconst  k j k) x_i x_k.
    \end{align*}
    Since $Q$ is a basis of $RV \cap VR$, by Lemma \ref{lem:RVVRbasis}, we see that condition (2) of Theorem~\ref{thm:BG}, namely that $\Image(\alpha~\id_V - \id_V~\alpha) \subseteq R $, holds
    if and only if for each $x_i x_j x_k \in Q$ the expansion of the displayed equation above is in $ R $.
    It is now clear that condition~(I) is equivalent to condition (2) of Theorem~\ref{thm:BG}.
    
    Next, to study condition (II), we assume that condition (I) holds and we use the filtration parameters to evaluate the maps $\alpha(\alpha ~ \id_V - \id_V ~ \alpha)$ and $ -(\beta ~ \id_V - \id_V ~ \beta)$ of Theorem~\ref{thm:BG}(3) on $Q$.
    First, by condition (I), we have that for $x_i x_j x_k \in Q$,
  \begin{equation} \label{eq:alpha}
      (\alpha ~ \id_V - \id_V ~ \alpha)(x_i x_j x_k)
        = \sum_{\substack{m \in \{1, \ldots, n\} \\ x_m x_k \in R}} \aconst m i j x_m x_k - \sum_{\substack{m' \in \{1, \ldots, n\} \\ x_i x_{m'} \in R}} \aconst {m'} j k x_i x_{m'}.
  \end{equation}
    Thus, we compute $\alpha(\alpha ~ \id_V - \id_V ~ \alpha)(x_i x_j x_k)$ as follows:
    \begin{align*}
      \alpha & \left( \sum_{\substack{m \in \{1, \ldots, n\} \\ x_m x_k \in R}} \aconst m i j x_m x_k - \sum_{\substack{m' \in \{1, \ldots, n\} \\ x_i x_{m'} \in R}}  \aconst {m'} j k x_i x_{m'}  \right) \\
      & \hspace{.2in} = \sum_{\substack{m \in \{1, \ldots, n\} \\ x_m x_k \in R}} \aconst m i j  \left( \sum_{r = 1}^n \aconst r m k x_r \right) - \sum_{\substack{m' \in \{1, \ldots, n\} \\ x_i x_{m'} \in R}} \aconst {m'} j k \left( \sum_{r = 1}^n \aconst r i {m'} x_r \right)
 ~~=~~ \sum_{r = 1}^n 
     d^{ijk}_r x_r .
    \end{align*}
    On the other hand, we have 
    \[
      -(\beta ~ \id_V - \id_V ~ \beta)(x_i x_j x_k) = - \bconst i j x_k + \bconst j k x_i.
    \]
    Therefore, by comparing coefficients, condition (3) of Theorem~\ref{thm:BG} is equivalent to condition (II).
   
    Lastly, we show that condition (4) of Theorem~\ref{thm:BG} is equivalent to condition~(III).
    To conclude this, apply $\beta$ to \eqref{eq:alpha} to obtain:
    \begin{align*}
      \beta \left((\alpha ~ \id_V - \id_V ~ \alpha)(x_i x_j x_k) \right)
      &= \sum_{\substack{m \in \{1, \ldots, n\}  \\ x_m x_k \in R}} \aconst m i j \bconst m k  - \sum_{\substack{m' \in \{1, \ldots, n\} \\ x_i x_{m'} \in R}} \bconst i {m'} \aconst {m'} j k.
    \end{align*}

    \vspace{-.2in}
    
    \end{proof}

\begin{example}
  As a continuation of Example~\ref{ex:3gens}, we compute all PBW deformations of $A = \kk \langle x_1, x_2, x_3 \rangle / (x_1^2,~ x_1 x_2,~ x_2 x_3,~ x_3^2 )$.
   
  To start, note that the basis $Q$ of $RV \cap VR$ consists of the elements $x_1^3$, $x_1^2 x_2$, $x_1 x_2 x_3$, $x_2 x_3^2$, $x_3^3$.
  Applying condition (I) of Theorem \ref{thm:main} to each basis element yields
  \begin{equation} \label{eq:exampleconditioni}
    \aconst 2 1 1 = \aconst 3 1 1 = \aconst 3 1 2 = \aconst 1 2 3 = \aconst 1 3 3 = \aconst 2 3 3 = 0 \quad \text{and} \quad
    \aconst 1 1 2 = \aconst 3 2 3.
  \end{equation} 
  For example, let us apply Theorem~\ref{thm:main}(I) to $x_1^2x_2 \in Q$, where $i=j=1$ and $k=2$.
  This can be visualized by considering Figure~2 as a subgraph of Figure~1 with $i=j=1$ and $k=2$.
 In $\Gamma(A)$  there are no arrows from vertex~2 to vertex~2, or from 3 to 2; so, Theorem~\ref{thm:main}(I.a) with $m = 3,2$ respectively  will yield $\aconst 2 1 1 = \aconst 3 1 1 = 0$.  Similarly, there is no arrow from 1 to 3;  Theorem~\ref{thm:main}(I.b) with $m' = 3$  will yield $\aconst 3 1 2 = 0$. Now \eqref{eq:exampleconditioni} holds by similar uses of Figures~1 and~2 applied to the rest of the basis $Q$. 

  Theorem \ref{thm:main}(II) applied to the elements of $Q$ will yield
  \begin{equation} 
   \begin{array}{ll}
   \bconst 1 1 = (\aconst 2 1 2)^2 - \aconst 1 1 1 \aconst 2 1 2 ,\quad
    & \bconst 1 2 = - \aconst 1 1 2 \aconst 2 1 2 ,\\
   \bconst 2 3 = - \aconst 2 2 3 \aconst 3 2 3, \quad
    &\bconst 3 3 = (\aconst 2 2 3)^2 - \aconst 2 2 3 \aconst 3 3 3. \label{eq:exampleconditionii}
    \end{array}
  \end{equation}
  Finally, condition~(III) is vacuous from the computations above; namely, condition~(III) is equivalent to
  \[
    \aconst 1 1 1 \bconst 1 2 - \bconst 1 1 \aconst 1 1 2 - \bconst 1 2 \aconst 2 1 2 ~=~ \aconst 2 1 2 \bconst 2 3 - \bconst 1 2 \aconst 2 2 3 ~=~ \aconst 2 2 3 \bconst 2 3 + \aconst 3 2 3 \bconst 3 3 - \bconst 2 3 \aconst 3 3 3 = 0,
  \]
  and the first three expressions are equal to $0$ by 
  \eqref{eq:exampleconditionii}.
  Thus, the PBW deformations of $A$ are determined and classified by the free parameters $\aconst 1 1 1$, $\aconst 1 1 2$, $\aconst 2 1 2$, $\aconst 2 2 3$, $\aconst 3 3 3$ with all other parameters determined by  \eqref{eq:exampleconditioni} and \eqref{eq:exampleconditionii}.
\end{example}


\section{Special cases} \label{sect:cases}
 
Recall that  $V$ is the $\kk$-vector space with basis $\{x_1, \ldots, x_n\}$ and $R \subseteq T(V)$ is the span of a set of quadratic monomials in $x_i$.
In the last section, we considered maps $\alpha: R \to V$ and $\beta: R \to \kk$, corresponding filtration parameters $\aconst m i j$ and $\bconst i j$ (see Definition \ref{def:defparams}), along
with $P := \{r - \alpha(r) - \beta(r): r \in R\}$, and we provided explicit necessary and sufficient conditions on the filtration parameters so that the filtered algebra
$D = T(V) / (P)$
is a PBW deformation of the graded algebra
$A = T(V) / (R)$.
Namely, we established the three conditions (I)-(III) of Theorem~\ref{thm:main}.
In practice, one computes the basis $Q$ of $RV \cap VR$ as given in Lemma~\ref{lem:RVVRbasis}; next, one checks (I) for each element of $Q$ first, then (II) for each element of $Q$, then (III).
Here, we will investigate special cases in which condition (III) of Theorem~\ref{thm:main} need not be checked for a given element of $Q$, and in which condition (II) is partially verified.

By Lemma~\ref{lem:RVVRbasis}, any element of $Q$ can be written in one of the following forms, 
\[
  x_ix_jx_k, \quad
  x_i^2x_k, \quad
  x_ix_k^2, \quad
  x_ix_jx_i, \quad
  \text{or} \quad x_i^3
\]
where $i, j, k$ are distinct; we examine each of these cases separately.
The results achieved are Propositions \ref{prop:vac ijk}, \ref{prop:vac iik}, \ref{prop:vac ikk}, \ref{prop:vac iji}, and \ref{prop:vac iii}, respectively. 
Recall that condition (II) of Theorem~\ref{thm:main} gives restrictions on the filtration parameters of $D$ in terms of the expressions denoted by $d_r^{ijk}$ for each $r \in \{1, \ldots ,n\}$.
Our results will be presented in terms of $d_r^{ijk}$ for particular values of $r$. 
Conditions under which the propositions apply will also be represented by conditions on subgraphs of $\Gamma(A)$ (see Notation~\ref{notation:gamma(A)}).
For convenience of notation, we will sometimes write $\aconst m i j$ and $\bconst i j$ even if it is not known whether $x_i x_j \in R$, with the understanding that all such constants are zero if $x_i x_j \notin R$.

\begin{proposition} \label{prop:vac ijk} 
Assume that part~\textup{(I)} of Theorem~\ref{thm:main} holds for each element of $Q$, and take $x_i x_j x_k \in Q$ with $i, j, k$ distinct.
   \begin{enumerate}
   	\item[\textnormal{(1)}] Suppose that 
   	\begin{itemize}
   		\item $x_ix_{m'}\notin R$ for each $m'\neq  i, j$; and
   		\item $x_mx_k\notin R$ for each $m \neq  j, k$.
   	\end{itemize}
   	Then condition \textup{(II)} of Theorem~\ref{thm:main}, applied to $x_i x_j x_k$, holds for $r \neq i,j,k$; thus, one only needs to verify \textup{(II)} for $r = i,j,k$.
   
   	\item[\textnormal{(2)}] Along with the hypotheses of part \textnormal{(1)}, suppose that condition \textup{(II)} of Theorem~\ref{thm:main} holds for each element of $Q$ and that  
   	\[
   		x_i^2, x_k^2 \notin R \quad \text{or} \quad x_i^2, x_k^2 \in R. 
   	\]
   	Then condition \textup{(III)} of Theorem~\ref{thm:main}, applied to $x_ix_jx_k$, holds automatically.
   \end{enumerate}
\end{proposition}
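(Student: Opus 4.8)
The plan is to use the sparsity hypotheses to collapse both the vectors $d_r^{ijk}$ and the scalar of condition~(III) down to a few monomials in the parameters, and then to invoke the instances of (I) and (II) that are already assumed at the relevant elements of $Q$. For part~(1), I would first observe that the two bulleted hypotheses shrink the index sets in the definition of $d_r^{ijk}$: among $m$ with $x_m x_k\in R$ only $m=j$ and (if $x_k^2\in R$) $m=k$ survive, while among $m'$ with $x_i x_{m'}\in R$ only $m'=j$ and (if $x_i^2\in R$) $m'=i$ survive. Writing a parameter whose monomial is absent from $R$ as zero (the convention of Section~\ref{sect:cases}), this gives
\[
  d_r^{ijk} = \aconst j i j\,\aconst r j k + \aconst k i j\,\aconst r k k - \aconst r i j\,\aconst j j k - \aconst r i i\,\aconst i j k.
\]

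For $r\neq i,j,k$ I would then kill each of the four coefficients $\aconst r j k,\aconst r k k,\aconst r i j,\aconst r i i$ via condition~(I). The factors $\aconst r j k$ and $\aconst r i j$ vanish by (I.b) and (I.a) applied to $x_i x_j x_k$ itself, since $x_i x_r\notin R$ and $x_r x_k\notin R$ for such $r$ by hypothesis; the factors $\aconst r k k$ and $\aconst r i i$ are automatically zero unless $x_k^2\in R$ resp.\ $x_i^2\in R$, in which case $x_k^3$ resp.\ $x_i^3$ lies in $Q$ and condition~(I) applied to these loop elements forces $\aconst r k k=0$ resp.\ $\aconst r i i=0$ for $r\neq i,j,k$. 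Hence $d_r^{ijk}=0$ for $r\neq i,j,k$; as $i\neq k$, condition~(II) then reduces to its instances at $r=i,j,k$, which is part~(1).

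For part~(2) I would use the reformulation, extracted from the proof of Theorem~\ref{thm:main}, that condition~(III) at $x_i x_j x_k$ is $\beta\big((\alpha\,\id_V-\id_V\,\alpha)(x_i x_j x_k)\big)=0$; the same collapse turns its left-hand side into
\[
  S := \aconst j i j\,\bconst j k + \aconst k i j\,\bconst k k - \bconst i j\,\aconst j j k - \bconst i i\,\aconst i j k.
\]
The parity hypothesis then splits the argument. If $x_i^2,x_k^2\notin R$, the two $b$-square terms drop, conditions (I.a) and (I.b) at $x_i x_j x_k$ force $\aconst k i j=\aconst i j k=0$, and substituting $\bconst j k=d_i^{ijk}$ and $\bconst i j=-d_k^{ijk}$ from condition~(II) collapses $S$ to $(\aconst j i j)^2\,\aconst i j k-\aconst k i j\,(\aconst j j k)^2=0$.

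The substantive case, which I expect to be the main obstacle, is $x_i^2,x_k^2\in R$. Here $x_i^2 x_j$ and $x_j x_k^2$ also lie in $Q$, so condition~(II) supplies $\bconst i i=-d_j^{iij}$ and $\bconst k k=d_j^{jkk}$ alongside $\bconst j k=d_i^{ijk}$ and $\bconst i j=-d_k^{ijk}$. I would substitute all four into $S$ and expand, using condition~(I) at $x_i^3$ and $x_k^3$ both to discard coefficients such as $\aconst i k k$ and $\aconst k i i$ that are forced to zero and to cut the otherwise uncontrolled sums over the edges at the middle vertex $j$ (appearing in $d_j^{iij}$ and $d_j^{jkk}$) down to the indices lying in $\{i,j\}$ and $\{j,k\}$. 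The delicate step is that the resulting cross terms cancel in pairs, leaving the single product
\[
  S = \big(\aconst k i j\,\aconst j k k - \aconst i j k\,\aconst j i i\big)\big(\aconst i i j - \aconst j j j\big).
\]
The first factor is exactly $d_j^{ijk}$, which vanishes by condition~(II) at $x_i x_j x_k$ (as $j\neq i,k$), so $S=0$. This pairwise cancellation is precisely what requires the parity hypothesis: if exactly one of $x_i^2,x_k^2$ were in $R$, unmatched terms would survive and $S$ need not vanish.
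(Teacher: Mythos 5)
Your proposal is correct, and part (1) coincides with the paper's argument (collapse $d_r^{ijk}$ to four terms, then kill each coefficient using (I) at $x_ix_jx_k$, $x_i^3$, $x_k^3$). In part (2), however, your treatment of the case $x_i^2, x_k^2 \in R$ takes a genuinely different and heavier route than the paper's. The paper observes that since $x_ix_k \notin R$ (from the part (1) hypothesis, as $k \neq i,j$), condition (I.b) applied to $x_i^2x_j \in Q$ and condition (I.a) applied to $x_jx_k^2 \in Q$ already force $\aconst{k}{i}{j} = \aconst{i}{j}{k} = 0$; hence the terms $\aconst{k}{i}{j}\bconst{k}{k}$ and $\bconst{i}{i}\aconst{i}{j}{k}$ vanish without ever computing $\bconst{i}{i}$ or $\bconst{k}{k}$, and both parity cases reduce uniformly to the two-term identity $\aconst{j}{i}{j}\bconst{j}{k} - \bconst{i}{j}\aconst{j}{j}{k} = 0$, which is settled by (II) at $r=i,k$ together with (I.c). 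You instead substitute $\bconst{i}{i} = -d_j^{iij}$ and $\bconst{k}{k} = d_j^{jkk}$ via condition (II) at the neighboring elements of $Q$, expand, and exhibit the factorization $S = \bigl(\aconst{k}{i}{j}\aconst{j}{k}{k} - \aconst{i}{j}{k}\aconst{j}{i}{i}\bigr)\bigl(\aconst{i}{i}{j} - \aconst{j}{j}{j}\bigr) = d_j^{ijk}\bigl(\aconst{i}{i}{j} - \aconst{j}{j}{j}\bigr)$; I verified this identity, and your conclusion via (II) at $r=j$ is valid. What the paper's route buys is brevity and uniformity: once $\aconst{k}{i}{j} = \aconst{i}{j}{k} = 0$ is in hand, your first factor vanishes term by term, so the delicate pairwise cancellation, and the appeal to (II) at $x_i^2x_j$ and $x_jx_k^2$, can be bypassed entirely. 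One presentational point you should fix: both of your displayed collapses --- $(\aconst{j}{i}{j})^2\aconst{i}{j}{k} - \aconst{k}{i}{j}(\aconst{j}{j}{k})^2$ in the first case and the factorization in the second --- silently use (I.c), i.e.\ $\aconst{i}{i}{j} = \aconst{k}{j}{k}$, to cancel the cross terms $-\aconst{j}{i}{j}\aconst{i}{i}{j}\aconst{j}{j}{k} + \aconst{j}{i}{j}\aconst{k}{j}{k}\aconst{j}{j}{k}$; this is legitimate because (I) is assumed for every element of $Q$, but the citation should be made explicit, as the paper does.
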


\noindent The applicability of these conditions can be visualized by the subgraphs of $\Gamma(A)$ in Figure~3.
\begin{figure}[!h]
 { \footnotesize   \begin{tikzpicture}
	      \hspace{-.3in}
           \tikzset{vertex/.style = {shape = circle,fill = black,minimum size = 6pt,inner sep=0pt}}
           \tikzset{edge/.style = {-{Latex[length=2mm]}}}
           \tikzset{loopedge/.style = {-{Latex[length=2mm]}, in=45, out=135, loop}}
           \tikzset{every loop/.style={-{Latex[length=2mm]}, min distance=20mm,in=220,out=310,looseness=1}}

           \node[vertex, label=below:{$i$}] (i) at (0,0) {};
           \node[vertex, label=below:{$j$}] (j) at (1.5,0) {};
           \node[vertex, label=below:{$k$}] (k) at (3, 0) {};
           \node[vertex, label={$\forall \; m' \neq i ,j$}] (m') at (0.5,1) {};
           \node[vertex, label={$\forall \; m \neq j, k$}] (m) at (2.5, 1) {};
           
           \path[->] (i) edge [loop below, white] node {} ();
           \draw[edge, ] (i) to (j);
           \draw[edge, ] (j) to (k);
           \draw[edge, dashed] (i) to (m');
           \draw[edge, dashed] (m) to (k);
           
         \end{tikzpicture}
         
            \vspace{-.3in}

     \begin{tikzpicture}
           \tikzset{vertex/.style = {shape = circle,fill = black,minimum size = 6pt,inner sep=0pt}}
           \tikzset{edge/.style = {-{Latex[length=2mm]}}}
           \tikzset{loopedge/.style = {-{Latex[length=2mm]}, in=260, out=210, loop}}
           \tikzset{every loop/.style={-{Latex[length=2mm]}, min distance=10mm,in=220,out=310,looseness=1}}
         
           \node[vertex, label=below:$i$] (i) at (0,0) {};
           \node[vertex, label=below:$j$] (j) at (1.5,0) {};
           \node[vertex, label=below:$k$] (k) at (3,0) {};
           \node[vertex, label={$\forall \; m' \neq j$}] (m) at (0.5,1) {};
           \node[vertex, label={$\forall \; m \neq j$}] (m') at (2.5,1) {};
           
           \path[->] (i) edge  [loop below, dashed] node {} ();
           \draw[edge] (i) to (j);
           \draw[edge] (j) to (k);
           \path[->] (k) edge [loop below, dashed] node {} ();
           \draw[edge, dashed] (i) to (m);
           \draw[edge, dashed] (m') to (k);
         \end{tikzpicture}
         \begin{tikzpicture}
           \tikzset{vertex/.style = {shape = circle,fill = black,minimum size = 6pt,inner sep=0pt}}
           \tikzset{edge/.style = {-{Latex[length=2mm]}}}
           \tikzset{loopedge/.style = {-{Latex[length=2mm]}, in=260, out=210, loop}}
           \tikzset{every loop/.style={-{Latex[length=2mm]}, min distance=10mm,in=220,out=310,looseness=1}}
         
           \node[vertex, label=below:$i$] (i) at (0,0) {};
           \node[vertex, label=below:$j$] (j) at (1.5,0) {};
           \node[vertex, label=below:$k$] (k) at (3,0) {};
           \node[vertex, label={$\forall \; m' \neq i,j$}] (m) at (0.5,1) {};
           \node[vertex, label={$\forall \; m \neq j,k$}] (m') at (2.5,1) {};
           
           \path[->] (i) edge  [loop below] node {} ();
           \draw[edge] (i) to (j);
           \draw[edge] (j) to (k);
           \path[->] (k) edge [loop below] node {} ();
           \draw[edge, dashed] (i) to (m);
           \draw[edge, dashed] (m') to (k);
         \end{tikzpicture}}

\caption{Here, $\rightarrow$ (resp. $\dashrightarrow$) depicts an element of $R$ (resp. not in $R$).
   If $\Gamma(A)$ contains the top subgraph (resp. one of the bottom subgraphs), then Prop.~\ref{prop:vac ijk}(1) (resp. (2))   applies to $x_i x_j x_k \in Q$.}
   \end{figure}
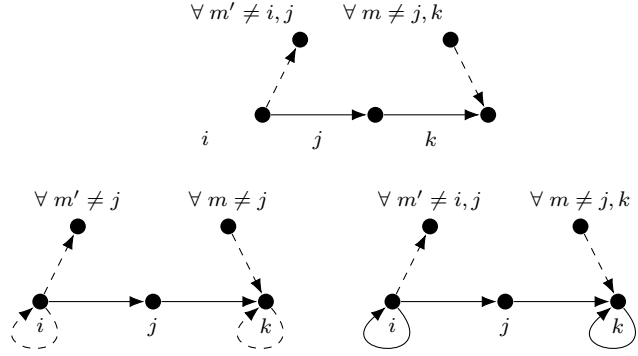

\begin{proof}[Proof of Proposition \ref{prop:vac ijk}]
  (1) By assumption,  $x_ i x_{m'}, x_m x_k \notin R$ for all $m' \neq i,j$ and $m \neq j,k$.
  Let $r \neq i,j,k$.
  Then,
  \begin{align*}
    d^{ijk}_r 
      &= \sum_{ \substack{m \in \{1, \ldots, n\} \\ x_m x_k \in R}} \aconst m i j \aconst r m k \ 
        - \sum_{\substack{m' \in \{1, \ldots, n\} \\ x_i x_{m'} \in R}} \aconst r i {m'} \aconst {m'} j k \\
      &= \aconst j i j \aconst r j k + \aconst k i j \aconst r k k - \aconst r i i \aconst i j k - \aconst r i j \aconst j j k .
   \end{align*}
  Since $x_i x_r, x_r x_k \notin R$, we get that $\aconst r j k = \aconst r i j = 0$ by Theorem~\ref{thm:main} (I.b) and (I.a), applied to $x_ix_jx_k$.
  Therefore, $d^{ijk}_r = \aconst k i j \aconst r k k - \aconst r i i \aconst i j k$.
  If $x_i^2 \in R$, then we can apply Theorem~\ref{thm:main} to $x_i^3 \in Q$; in that case, since $x_ix_r \notin R$ we have by Theorem~\ref{thm:main}(I.b)  that $\aconst r i i = 0$.
  On the other hand, if $x_i^2 \notin R$, then $\aconst r i i = 0$ by definition, so in either case, $\aconst r i i = 0$.
 We see similarly that $\aconst r k k = 0$.
   Therefore, $d_r^{i j k} = 0$, giving that condition (II), when applied to $x_ix_jx_k$, holds for $r \neq i, j, k$.
  
  (2) Now suppose also that condition (II) holds and that both $x_i^2, x_k^2 \notin R$ or both $x_i^2, x_k^2 \in R$.
  Then Theorem~\ref{thm:main}(III), applied to $x_ix_jx_k$, reduces to showing that
  \begin{equation*}
  \aconst j i j \bconst j k + \aconst k i j \bconst k k - \bconst i i \aconst i j k - \bconst i j \aconst j j k = 0.
  \end{equation*}
    Note that if $x_i^2, x_k^2 \notin R$, then $\bconst i i = \bconst k k = 0$ by definition.
    On the other hand, if $x_i^2, x_k^2 \in R$, then $x_i^2 x_j, x_j x_k^2 \in Q$.
    Since $x_i x_k \notin R$, applying (I.b) to $x_i^2 x_j$ yields $\aconst k i j = 0$.
    Similarly, applying (I.a) to $x_j x_k^2$ yields $\aconst i j k = 0$.
    In both cases we have $\aconst k i j \bconst k k = \bconst i i \aconst i j k = 0$.
  Thus, we only need to show that 
  \begin{equation} \label{4.1.3}
    \aconst j i j \bconst j k - \bconst i j \aconst j j k = 0.
  \end{equation}
  Applying (II) to $x_i x_j x_k$ for $r = i$ and $r = k$, we get $\bconst j k = \aconst j i j \aconst i j k - \aconst i i j \aconst j j k $ and $\bconst i j = \aconst k i j \aconst j j k - \aconst j i j \aconst k j k$.
 Now, if $x_i^2, x_k^2 \in R$, then as noted above, $\aconst i j k = \aconst k i j = 0$.
  On the other hand, if $x_i^2, x_k^2 \notin R$, by (I) applied to $x_i x_j x_k$, we still have $\aconst i j k = \aconst k i j = 0$.
  Thus, $\bconst i j = - \aconst j i j \aconst k j k$ and $\bconst j k = -\aconst i i j \aconst j j k$.
  Substituting for $\bconst i j$ and $\bconst j k$ in \eqref{4.1.3} yields
  \[
    - \aconst j i j \aconst i i j \aconst j j k + \aconst j i j \aconst k j k \aconst j j k = 0.
  \]
 Since $x_ix_k \notin R$, we have by (I.c), applied to $x_ix_jx_k$, that $\aconst i i j = \aconst k j k$.
 Thus, (III) holds for $x_i x_j x_k$.
\end{proof}


\begin{proposition} \label{prop:vac iik}
  Let $x_i^2 x_k \in Q$ for i, k distinct, and assume that Theorem~\ref{thm:main}\textup{(I)} holds for each element of $Q$.

  \begin{enumerate}
    \item[\textnormal{(1)}]
      Suppose that
      \begin{itemize}
      	\item $x_i x_{m'} \notin R$ for each $m' \neq  i, k$; and
      	\item $x_m x_k \notin R$ for each $m \neq  i, k$.
      \end{itemize}
    Then condition \textup{(II)} of Theorem~\ref{thm:main}, applied to $x_i^2 x_k$, holds for $r \neq i, k$; thus, one only needs to verify \textup{(II)} for $r = i,k$.

    \item[\textnormal{(2)}] 
      Along with the hypotheses of part \textnormal{(1)}, suppose that Theorem~\ref{thm:main}\textup{(II)} holds for each element of $Q$ and that
      \[
        x_k^2 \notin R \quad
          \text{or} \quad
          x_k x_i \notin R. 
      \]
      Then condition \textup{(III)} of Theorem~\ref{thm:main}, applied to $x_i^2 x_k$, holds automatically.
  \end{enumerate}
\end{proposition}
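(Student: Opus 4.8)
The plan is to specialize the argument of Proposition~\ref{prop:vac ijk} to the element $x_i^2x_k$, where the middle index of a generic $x_ix_jx_k\in Q$ is set equal to the first. First I would write out $d_r^{iik}$ from Theorem~\ref{thm:main}(II) and note that, since $x_i^2,x_ix_k\in R$, the hypotheses of part~(1) confine the index $m$ with $x_mx_k\in R$ and the index $m'$ with $x_ix_{m'}\in R$ to the set $\{i,k\}$. Using the convention that a parameter is $0$ whenever its monomial lies outside $R$, this collapses the defining sum to
\[
  d_r^{iik}=\aconst i i i\,\aconst r i k+\aconst k i i\,\aconst r k k-\aconst r i i\,\aconst i i k-\aconst r i k\,\aconst k i k.
\]

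For part~(1) I would fix $r\neq i,k$ and kill each term by applying condition~(I) to suitable elements of $Q$. Since $x_ix_r\notin R$ and $r\neq k$, part~(I.b) for $x_i^2x_k$ gives $\aconst r i k=0$, removing the first and fourth terms; since $x_rx_k\notin R$ and $r\neq i$, part~(I.a) for $x_i^2x_k$ gives $\aconst r i i=0$, removing the third term; and $\aconst r k k=0$ either by convention when $x_k^2\notin R$, or, when $x_k^2\in R$, by part~(I.b) applied to $x_ix_k^2\in Q$ (using $x_ix_r\notin R$ and $r\neq k$). Hence $d_r^{iik}=0$, so only $r=i,k$ remain.

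For part~(2) the key point is that each alternative of the hypothesis forces $\aconst k i i=0$: if $x_k^2\notin R$, part~(I.a) for $x_i^2x_k$ applies with $m=k$ (as $x_kx_k\notin R$); if instead $x_kx_i\notin R$, part~(I.a) for $x_i^3\in Q$ applies with $m=k$ (as $x_kx_i\notin R$). With $\aconst k i i=0$ the term $\aconst k i i\,\bconst k k$ drops out of condition~(III), and the two instances $d_i^{iik}=\bconst i k$ and $d_k^{iik}=-\bconst i i$ of condition~(II) collapse to $\bconst i k=-\aconst i i k\,\aconst k i k$ and $\bconst i i=(\aconst k i k)^2-\aconst i i i\,\aconst k i k$. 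Substituting these two expressions into the three surviving terms of condition~(III) and expanding, the monomials cancel in pairs to give $0$, which is the desired conclusion.

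I expect the main difficulty to be organizational rather than conceptual. One must bookkeep the indices in the second sums of $d_r^{iik}$ and of condition~(III) carefully—the relevant parameter there is $\aconst {m'} j k$ with the middle index $j$ specialized to $i$—and one must recognize that the disjunction ``$x_k^2\notin R$ or $x_kx_i\notin R$'' is exactly what is needed to force $\aconst k i i=0$, through the two distinct elements $x_i^2x_k$ and $x_i^3$ of $Q$ respectively. Once $\aconst k i i=0$ and the two condition~(II) substitutions are in place, the final cancellation in condition~(III) is a short, direct computation.
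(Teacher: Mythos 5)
Your proof is correct and follows essentially the same path as the paper's: for part (1) expand $d_r^{iik}$ and kill each term via condition (I), and for part (2) use the disjunction to force $\aconst k i i = 0$, then substitute the $r=i,k$ instances of (II) into the reduced three-term form of (III) and cancel. The only (harmless) divergence is that when $x_k^2 \in R$ you obtain $\aconst r k k = 0$ by applying (I.b) to $x_i x_k^2 \in Q$, whereas the paper applies condition (I) to $x_k^3 \in Q$; both are valid.
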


\noindent These conditions are depicted by the subgraphs of $\Gamma(A)$ in Figure~4. 

\begin{figure}[h!]
 { \footnotesize      \begin{tikzpicture}
        \tikzset{vertex/.style = {shape = circle,fill = black,minimum size = 6pt,inner sep=0pt}}
        \tikzset{edge/.style = {-{Latex[length=2mm]}}}
        \tikzset{loopedge/.style = {-{Latex[length=2mm]}, in=260, out=210, loop}}
        \tikzset{every loop/.style={-{Latex[length=2mm]}, min distance=10mm,in=220,out=310,looseness=1}}
      
        \node[vertex, label=below:$i$] (i) at (0,0){};
        \node[vertex, label=below:$k$] (k) at (3,0){};
        \node[vertex, label={$\forall \; m' \neq i, k$}] (m) at (0.5,1) {};
        \node[vertex, label={$\forall \; m \neq i, k$}] (m') at (2.5, 1) {};
        
        \draw[loopedge] (i) to (i);
        \draw[edge, bend right, looseness=.5] (i) to (k);
        \draw[edge, dashed] (i) to (m);
        \draw[edge, dashed] (m') to (k);
      \end{tikzpicture}

      \begin{tikzpicture}
        \tikzset{vertex/.style = {shape = circle,fill = black,minimum size = 6pt,inner sep=0pt}}
        \tikzset{edge/.style = {-{Latex[length=2mm]}}}
        \tikzset{loopedge/.style = {-{Latex[length=2mm]}, in=260, out=210, loop}}
        \tikzset{every loop/.style={-{Latex[length=2mm]}, min distance=10mm,in=220,out=310,looseness=1}}
      
        \node[vertex, label=below:$i$] (i) at (0,0) {};
        \node[vertex, label=below:$k$] (k) at (3,0) {};
        \node[vertex, label={$\forall \; m' \neq i, k$}] (m') at (0.5,1) {};
        \node[vertex, label={$\forall \; m \neq i$}] (m) at (2.5,1) {};
        
        \draw[loopedge] (i) to (i);
        \draw[loopedge, dashed] (k) to (k);
        \draw[edge, bend right,looseness=.5] (i) to (k); 
        \draw[edge, dashed] (i) to (m');
        \draw[edge, dashed] (m) to (k);
      \end{tikzpicture}
      \begin{tikzpicture}
        \tikzset{vertex/.style = {shape = circle,fill = black,minimum size = 6pt,inner sep=0pt}}
        \tikzset{edge/.style = {-{Latex[length=2mm]}}}
        \tikzset{loopedge/.style = {-{Latex[length=2mm]}, in=260, out=210, loop}}
        \tikzset{every loop/.style={-{Latex[length=2mm]}, min distance=10mm,in=220,out=310,looseness=1}}
      
        \node[vertex, label=below:$i$] (i) at (0,0){};
        \node[vertex, label=below:$k$] (k) at (3,0){};
        \node[vertex, label={$\forall \; m' \neq i, k$}] (m') at (0.5,1) {};
        \node[vertex, label={$\forall \; m \neq i, k$}] (m) at (2.5, 1) {};
        
        \draw[loopedge] (i) to (i);
        \draw[edge, bend right, looseness=.5] (i) to (k);
        \draw[edge, dashed] (i) to (m');
        \draw[edge, dashed] (m) to (k);
        \draw[edge, dashed, bend right, looseness=.5] (k) to (i);
      \end{tikzpicture}}
      \caption{
       Here, 
        $\rightarrow$ (resp.  $\dashrightarrow$) depicts an element in $R$ (resp. not in $R$). If $\Gamma(A)$ contains the top subgraph (resp. one of the bottom subgraphs), then Prop.~\ref{prop:vac iik}(1) (resp. (2)) applies for $x_i^2 x_k \in Q$.
      }
\end{figure}
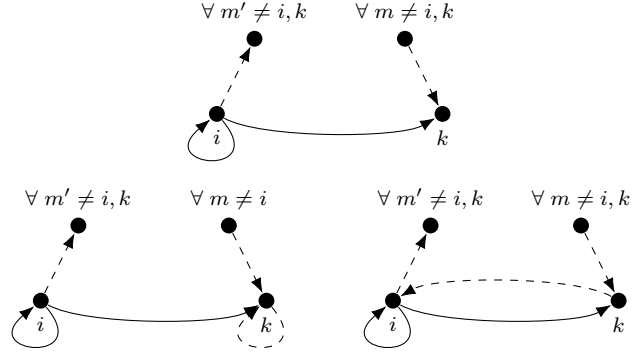

\begin{proof}[Proof of Proposition \ref{prop:vac iik}]
  (1) By assumption, $x_i x_{m'}, x_m x_k \notin R$ for all $m' \neq i,k$ and $m \neq i,k$.
  Let $r \neq i,k$.
  Then,
  \begin{align*}
    d^{i i k}_r
      &= \sum_{ \substack{m \in \{1, \ldots, n\} \\ x_m x_k \in R}} \aconst m i i \aconst r m k \ 
        - \sum_{\substack{m' \in \{1, \ldots, n\} \\ x_i x_{m'} \in R}} \aconst r i {m'} \aconst {m'} i k \\
      &= \aconst i i i \aconst r i k + \aconst k i i \aconst r k k - \aconst r i i \aconst i i k - \aconst r i k \aconst k i k.
  \end{align*}
  Since, $x_i x_r, x_r x_k \notin R$, by Theorem~\ref{thm:main}(I), $\aconst r i k = \aconst r i i = 0$.
  Thus, $d^{iik}_r = \aconst k i i \aconst r k k$.
  If $x_k^2 \in R$, then we can apply Theorem~\ref{thm:main} to $x_k^3 \in Q$; in that case, since $x_rx_k \notin R$ we have by Theorem~\ref{thm:main}(I.b)  that $\aconst r k k = 0$.  
  On the other hand, if $x_k^2 \notin R$, then $\aconst r k k= 0$ by definition, so in either case, $\aconst r k k = 0$.
  Therefore, $d^{iik}_r = 0$ so (II), when applied to $x_i^2 x_k$, holds for $r \neq i,k$.

  (2) Now suppose also that condition (II) holds and that $x_k^2 \notin R$ or $x_k x_i \notin R$.
  Then Theorem~\ref{thm:main}(III), applied to $x_i^2 x_k$, reduces to showing that
  \[
    \aconst i i i \bconst i k + \aconst k i i \bconst k k - \bconst i i \aconst i i k - \bconst i k \aconst k i k = 0.
  \]
 Since $x_k^2 \notin R$ or $x_k x_i \notin R$, we have $\aconst k i i = 0$; in the first case, this follows from Theorem~\ref{thm:main}(I.a) applied to $x_i^2 x_k$ and in the second case, this follows from Theorem~\ref{thm:main}(I.a) applied to $x_i^3$.
  Thus, we only need to show that
  \begin{equation} \label{iik3}
    (\aconst i i i - \aconst k i k) \bconst i k - \bconst i i \aconst i i k = 0.
  \end{equation}
  Applying (II) to $x_i^2 x_k$ for $r = i$ and $r = k$, we get $\bconst i k = - \aconst i i k \aconst k i k$ and $\bconst i i =  \aconst k i k (\aconst k i k - \aconst i i i )$.
  Substituting for $\bconst i i$ and $\bconst i k$ in \eqref{iik3} yields 
 (III) for $x_i^2 x_k$, as desired.
\end{proof}


\begin{proposition} \label{prop:vac ikk}
Let $x_ix_k^2\in Q$ with $i \neq  k$, and assume that Theorem~\ref{thm:main}\textup{(I)}  holds for each element of $Q$. 

\begin{enumerate}
   \item[\textnormal{(1)}]
      Suppose that
      \begin{itemize}
      	\item $x_i x_{m'} \notin R$ for each $m' \neq  i, k$; and
      	\item $x_m x_k \notin R$ for each $m \neq  i, k$
      \end{itemize}
      Then condition \textup{(II)} Theorem~\ref{thm:main}, applied to $x_ix_k^2$, holds for $r \neq i,k$; thus, one only needs to verify \textup{(II)} for $r=i,k$.

	\item[\textnormal{(2)}] 
   	Along with the hypotheses of part \textnormal{(1)}, suppose that  Theorem~\ref{thm:main}\textup{(II)} holds for each element of $Q$ and that
   	\[
         x_i^2 \notin R \quad \text{or} \quad x_kx_i \notin R.
      \] 
      Then condition \textup{(III)} of Theorem~\ref{thm:main}, applied to $x_i x_k^2$, holds automatically.
\end{enumerate}

\end{proposition}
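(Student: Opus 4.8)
The plan is to follow the template of the proof of Proposition~\ref{prop:vac iik}, with the coefficient $\aconst i k k$ now playing the role that $\aconst k i i$ played there. First I would identify $x_ix_k^2$ with the triple $x_Ix_Jx_K$ having $I=i$, $J=k$, $K=k$, and write out $d_r^{ikk}$ from Theorem~\ref{thm:main}(II). Using the two bulleted hypotheses of part~(1), namely $x_ix_{m'}\notin R$ for $m'\neq i,k$ and $x_mx_k\notin R$ for $m\neq i,k$, both index sets collapse to $\{i,k\}$ (note $x_ix_k,\,x_k^2\in R$ since $x_ix_k^2\in Q$), giving
\[
  d_r^{ikk} = \aconst i i k \aconst r i k + \aconst k i k \aconst r k k - \aconst r i i \aconst i k k - \aconst r i k \aconst k k k.
\]

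For part~(1) I take $r\neq i,k$ and kill each term. Condition~(I.a) applied to $x_ix_k^2$ gives $\aconst r i k =0$ (as $r\neq i$ and $x_rx_k\notin R$), eliminating the first and fourth terms. Since $x_k^2\in R$ we have $x_k^3\in Q$, and (I.a) applied to $x_k^3$ gives $\aconst r k k=0$, eliminating the second term; similarly $\aconst r i i=0$, either by the standing convention (if $x_i^2\notin R$) or by (I.b) applied to $x_i^3$ (if $x_i^2\in R$), eliminating the third. Hence $d_r^{ikk}=0$ for $r\neq i,k$, which is part~(1).

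The crux of part~(2) is to show that the disjunctive hypothesis forces $\aconst i k k=0$. If $x_kx_i\notin R$, then (I.b) applied to $x_k^3$ (with $m'=i$, using $x_k^2\in R$) gives $\aconst i k k=0$; if instead $x_i^2\notin R$, then (I.b) applied to $x_ix_k^2$ \emph{itself} (with $m'=i$, since $x_ix_i=x_i^2\notin R$) gives $\aconst i k k=0$. With $\aconst i k k=0$ in hand, the term $\bconst i i \aconst i k k$ of condition~(III) drops out and (III), restricted as above to indices in $\{i,k\}$, reduces to $(\aconst i i k-\aconst k k k)\bconst i k + \aconst k i k \bconst k k=0$. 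Finally, evaluating Theorem~\ref{thm:main}(II) on $x_ix_k^2$ at $r=i$ and $r=k$ (again using $\aconst i k k=0$) yields $\bconst k k=\aconst i i k(\aconst i i k-\aconst k k k)$ and $\bconst i k=-\aconst i i k\aconst k i k$; substituting these makes the two summands cancel, so (III) holds.

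The main obstacle is the disjunctive hypothesis in part~(2): the two cases must be handled by genuinely different applications of condition~(I)---one to the triple $x_k^3$ and one to the triple $x_ix_k^2$ itself---and the less obvious of the two is spotting that when $x_i^2\notin R$ the vanishing of $\aconst i k k$ already follows from (I.b) for $x_ix_k^2$ with $m'=i$. Once $\aconst i k k=0$ is secured, the remaining verification of (III) is a routine substitution of the (II)-relations followed by a cancellation, exactly parallel to equation~\eqref{iik3} in the proof of Proposition~\ref{prop:vac iik}.
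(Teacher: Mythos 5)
Your proof is correct and takes exactly the approach the paper intends: the paper's own ``proof'' of Proposition~\ref{prop:vac ikk} is just the remark that it is similar to that of Proposition~\ref{prop:vac iik}, and your argument is the faithful dualization of that proof, with $\aconst i k k$ replacing $\aconst k i i$ and condition (I.b) (applied to $x_k^3$ and to $x_ix_k^2$ itself) replacing the corresponding uses of (I.a). All the computations check out, including the key observation that the case $x_i^2 \notin R$ is handled by (I.b) applied to $x_ix_k^2$ with $m'=i$.
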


These conditions can be visualized by the subgraphs of $\Gamma(A)$ in Figure~5.

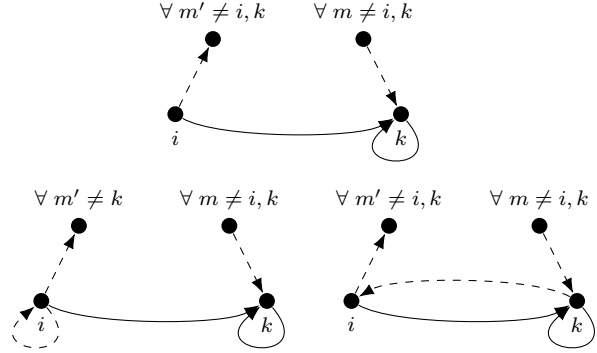
\begin{figure}[!h]
   { \footnotesize    \begin{tikzpicture}
		\tikzset{vertex/.style = {shape = circle,fill = black,minimum size = 6pt,inner sep=0pt}}
        \tikzset{edge/.style = {-{Latex[length=2mm]}}}
        \tikzset{loopedge/.style = {-{Latex[length=2mm]}, in=260, out=210, loop}}
        \tikzset{every loop/.style={-{Latex[length=2mm]}, min distance=10mm,in=220,out=310,looseness=1}}
      
        \node[vertex, label=below:$i$] (i) at (0,0) {};
        \node[vertex, label=below:$k$] (k) at (3,0) {};
        \node[vertex, label={$\forall \; m' \neq i, k$}] (m) at (0.5,1) {};
        \node[vertex, label={$\forall \; m \neq i, k$}] (m') at (2.5,1) {};
        
        \draw[edge, bend right, looseness=.5] (i) to (k);
		\draw[loopedge] (k) to (k);
        \draw[edge, dashed] (i) to (m);
        \draw[edge, dashed] (m') to (k);

      \end{tikzpicture}

      \begin{tikzpicture}
	  \tikzset{vertex/.style = {shape = circle,fill = black,minimum size = 6pt,inner sep=0pt}}
      \tikzset{edge/.style = {-{Latex[length=2mm]}}}
      \tikzset{loopedge/.style = {-{Latex[length=2mm]}, in=260, out=210, loop}}
      \tikzset{every loop/.style={-{Latex[length=2mm]}, min distance=10mm,in=220,out=310,looseness=1}}
      
        \node[vertex, label=below:$i$] (i) at (0,0) {};
        \node[vertex, label=below:$k$] (k) at (3,0) {};
        \node[vertex, label={$\forall \; m' \neq k$}] (m) at (0.5,1) {};
        \node[vertex, label={$\forall \; m \neq i, k$}] (m') at (2.5,1) {};
        
        \draw[loopedge, dashed] (i) to (i);
        \draw[edge, bend right, looseness=.5] (i) to (k);
		\draw[loopedge] (k) to (k); 
        \draw[edge, dashed] (i) to (m);
        \draw[edge, dashed] (m') to (k);
      \end{tikzpicture}
            \begin{tikzpicture}
		\tikzset{vertex/.style = {shape = circle,fill = black,minimum size = 6pt,inner sep=0pt}}
        \tikzset{edge/.style = {-{Latex[length=2mm]}}}
        \tikzset{loopedge/.style = {-{Latex[length=2mm]}, in=260, out=210, loop}}
        \tikzset{every loop/.style={-{Latex[length=2mm]}, min distance=10mm,in=220,out=310,looseness=1}}
      
        \node[vertex, label=below:$i$] (i) at (0,0) {};
        \node[vertex, label=below:$k$] (k) at (3,0) {};
        \node[vertex, label={$\forall \; m' \neq i, k$}] (m) at (0.5,1) {};
        \node[vertex, label={$\forall \; m \neq i, k$}] (m') at (2.5,1) {};
        
        \draw[edge, bend right, looseness=.5] (i) to (k);
		\draw[loopedge] (k) to (k);
        \draw[edge, dashed] (i) to (m);
        \draw[edge, dashed] (m') to (k);
        \draw[edge, bend right, dashed, looseness=.5] (k) to (i);
       \end{tikzpicture}}

   \caption{
   Here, 
        $\rightarrow$ (resp.  $\dashrightarrow$) depicts an element in $R$ (resp. not in $R$). If $\Gamma(A)$ contains the top subgraph (resp. one of the bottom subgraphs), then Prop.~\ref{prop:vac ikk}(1) (resp. (2)) applies for $x_i x_k^2 \in Q$.}
\end{figure}

\begin{proof}[Proof of Proposition \ref{prop:vac ikk}]
The proof is similar to that of Proposition~\ref{prop:vac iik}.
\end{proof}


\begin{proposition} \label{prop:vac iji}
  Let $x_i x_j x_i\in Q$ with $i \neq j$, and assume that Theorem~\ref{thm:main}\textup{(I)} holds for each element of $Q$. 
   \begin{enumerate}
   	\item[\textnormal{(1)}]
   	  Suppose that for each $m \neq  i, j$, one of the following conditions holds:
        \[
         	x_ix_m, x_mx_i \notin R
         	\quad \text{ or } \quad
         	x_jx_m, x_mx_j \notin R
         	\quad \text{ or } \quad
         	x_m^2 \notin R.
        \]
        Then condition \textup{(II)} of Theorem~\ref{thm:main}, applied to $x_i x_j x_i$, holds for $r \neq i, j$; thus, one only needs to verify \textup{(II)} for $r=i,j$.
   	\item[\textnormal{(2)}] 
   	  Along with the hypotheses of part \textnormal{(1)}, suppose that condition \textup{(II)} of Theorem~\ref{thm:main} holds for each element of $Q$ and that both $x_i^2, x_j^2 \notin R$.
   	  Then condition \textup{(III)} of Theorem~\ref{thm:main}, applied to $x_ix_jx_i$, holds automatically.
   \end{enumerate}
\end{proposition}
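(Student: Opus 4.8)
The plan is to follow the template of Propositions~\ref{prop:vac ijk}--\ref{prop:vac ikk}, specializing the formula for $d_r^{ijk}$ of Theorem~\ref{thm:main}(II) to the element $x_ix_jx_i$, which is the $i=k$ case (so that Theorem~\ref{thm:main}(II.b) governs the relevant instances). Since $x_ix_jx_i\in Q$, both $x_ix_j,x_jx_i\in R$, so the length-three paths $x_ix_jx_i$ and $x_jx_ix_j$ both lie in $Q$; I will apply Theorem~\ref{thm:main}(I) freely to these and to other members of $Q$.

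For part~(1) I would first show that, independently of $r$, every summation term with index $m\neq i,j$ (resp. $m'\neq i,j$) vanishes. Indeed, if $\aconst m i j\neq 0$ with $m\neq i,j$, then (I.a) on $x_ix_jx_i$ forces $x_mx_i\in R$ and (I.b) on $x_jx_ix_j$ forces $x_jx_m\in R$; hence branches (a),(b) of the trichotomy fail at $m$, so branch (c) gives $x_m^2\notin R$, and then (I.a) on the path $x_ix_jx_m\in Q$ yields $\aconst m i j=0$, a contradiction. The symmetric argument (via (I.a) on $x_jx_ix_j$, (I.b) on $x_ix_jx_i$, and (I.b) on $x_{m'}x_jx_i$) gives $\aconst{m'}j i=0$ for $m'\neq i,j$. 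Thus only $m=j$, $m'=j$, and, when $x_i^2\in R$, $m=i$, $m'=i$ survive, leaving
\[
  d_r^{iji}=\aconst j i j\,\aconst r j i-\aconst r i j\,\aconst j j i+\bigl[\aconst i i j\,\aconst r i i-\aconst r i i\,\aconst i j i\bigr],
\]
where the bracket appears only if $x_i^2\in R$. For fixed $r\neq i,j$ I then invoke the trichotomy at $m=r$. In branch (a) ($x_ix_r,x_rx_i\notin R$) the parameters $\aconst r j i$, $\aconst r i j$, $\aconst r i i$ vanish by (I.b) on $x_ix_jx_i$, (I.a) on $x_ix_jx_i$, and (I.a) on $x_i^3$, respectively; in branch (b) ($x_jx_r,x_rx_j\notin R$) by (I.a) on $x_jx_ix_j$, (I.b) on $x_jx_ix_j$, and (I.a) on $x_i^2x_j$; in branch (c) ($x_r^2\notin R$) one observes that nonvanishing of the parameter already forces (through the constraints just used) whichever arrow is needed to build the path $x_jx_ix_r$, $x_rx_ix_j$, or $x_i^2x_r$, and then (I) applied to that path, together with $x_r^2\notin R$, kills the parameter. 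In every branch $d_r^{iji}=0$, giving~(1).

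For part~(2), assume additionally that (II) holds everywhere and $x_i^2,x_j^2\notin R$. The same vanishing of the $m\neq i,j$ and $m'\neq i,j$ terms reduces Theorem~\ref{thm:main}(III), applied to $x_ix_jx_i$, to $\aconst j i j\,\bconst j i-\aconst j j i\,\bconst i j=0$. Because $x_i^2,x_j^2\notin R$, condition (I.c) applied to $x_ix_jx_i$ and to $x_jx_ix_j$ gives $\aconst i i j=\aconst i j i$ and $\aconst j j i=\aconst j i j$. The same reduction shows $d_i^{iji}=\aconst j i j\,\aconst i j i-\aconst i i j\,\aconst j j i$, which vanishes by these two identities; since Theorem~\ref{thm:main}(II.b) gives $d_i^{iji}=\bconst j i-\bconst i j$, we obtain $\bconst j i=\bconst i j$. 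Substituting $\aconst j j i=\aconst j i j$ and $\bconst i j=\bconst j i$ into the reduced equation makes it identically zero, so (III) holds for $x_ix_jx_i$.

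I expect branch~(c) of the trichotomy in part~(1) to be the main obstacle: unlike (a) and (b), the hypothesis $x_r^2\notin R$ does not directly match the incoming/outgoing structure of the parameter in question, and the key observation is that assuming the parameter is nonzero is exactly what supplies the arrow needed to form a length-three path ending or beginning in $x_r$, against which (I) can be applied. Identifying the correct instance of (I) for each of $\aconst r j i$, $\aconst r i j$, and $\aconst r i i$ — and verifying that the required path indeed lies in $Q$ — is the bulk of the bookkeeping.
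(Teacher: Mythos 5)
Your proposal is correct and takes essentially the same route as the paper: part (1) eliminates the parameters $\aconst m i j$, $\aconst m j i$, $\aconst m i i$ (for $m \neq i,j$) by applying condition (I) to the auxiliary paths $x_ix_jx_i$, $x_jx_ix_j$, $x_i^3$, $x_i^2x_j$, $x_ix_jx_m$, $x_i^2x_r$ under the given trichotomy, and part (2) matches the paper's argument exactly, reducing (III) to $\aconst j i j(\bconst j i - \bconst i j) = 0$ via (I.c) and concluding with (II) at $r=i$. The only difference is organizational: you argue contrapositively (a nonvanishing parameter forces the arrows needed to build a path against which (I) yields a contradiction), whereas the paper runs the same case analysis directly, showing all relevant parameters vanish in each branch.
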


These conditions are pictured as subgraphs of $\Gamma(A)$ in Figure~6.

\begin{figure}[h!]
    {\footnotesize  \begin{tikzpicture}
		\tikzset{vertex/.style = {shape = circle,fill = black,minimum size = 6pt,inner sep=0pt}}
        \tikzset{edge/.style = {-{Latex[length=2mm]}}}
        \tikzset{loopedge/.style = {-{Latex[length=2mm]}, in=260, out=210, loop}}
        \tikzset{every loop/.style={-{Latex[length=2mm]}, min distance=20mm,in=220,out=310,looseness=1}}
      
        \node[vertex, label=below:$i$] (i) at (0,0) {};
        \node[vertex, label=below:$j$] (j) at (2.5,0) {};
        \node[vertex, label={$m \neq i,j$}] (m) at (0.75,2) {};
        \node[vertex, white] (w) at (3.5, 0){};
        
        \draw[edge, bend left, looseness=.5] (i) to (j);
        \draw[edge, bend left, looseness=.5] (j) to (i);
        \draw[edge, dashed, bend right, looseness=.5] (m) to (i);
        \draw[edge, dashed, bend right, looseness=.5] (i) to (m);
      \end{tikzpicture}
      \begin{tikzpicture}
       	\tikzset{vertex/.style = {shape = circle,fill = black,minimum size = 6pt,inner sep=0pt}}
        \tikzset{edge/.style = {-{Latex[length=2mm]}}}
        \tikzset{loopedge/.style = {-{Latex[length=2mm]}, in=260, out=210, loop}}
        \tikzset{every loop/.style={-{Latex[length=2mm]}, min distance=20mm,in=220,out=310,looseness=1}}
      
        \node[vertex, label=below:$i$] (i) at (0,0) {};
        \node[vertex, label=below:$j$] (j) at (2.5,0) {};
        \node[vertex, label={$m \neq i,j$}] (m) at (1.75,2) {};
        
        \draw[edge, bend right, looseness=.5] (i) to (j);
        \draw[edge, bend right, looseness=.5] (j) to (i);
        \draw[edge, dashed, bend left, looseness=.5] (m) to (j);
        \draw[edge, dashed, bend left, looseness=.5] (j) to (m);
      \end{tikzpicture}
      \begin{tikzpicture}
       	\tikzset{vertex/.style = {shape = circle,fill = black,minimum size = 6pt,inner sep=0pt}}
        \tikzset{edge/.style = {-{Latex[length=2mm]}}}
        \tikzset{loopedge/.style = {-{Latex[length=2mm]}, in=260, out=210, loop}}
        \tikzset{every loop/.style={-{Latex[length=2mm]}, min distance=10mm,in=220,out=310,looseness=1}}
      
        \node[vertex, label=below:$i$] (i) at (0,0) {};
        \node[vertex, label=below:$j$] (j) at (2.5,0) {};
        \node[vertex, label={$m \neq i,j$}] (m) at (1.25,2) {};
        \node[vertex, white] (w) at (-1, 0) {};
        
        \draw[edge, bend right, looseness=.5] (i) to (j);
        \draw[edge, bend right, looseness=.5] (j) to (i);
        \path[->] (m) edge [loop below, dashed] node {} ();
      \end{tikzpicture}

      \begin{tikzpicture}
		\tikzset{vertex/.style = {shape = circle,fill = black,minimum size = 6pt,inner sep=0pt}}
        \tikzset{edge/.style = {-{Latex[length=2mm]}}}
        \tikzset{loopedge/.style = {-{Latex[length=2mm]}, in=260, out=210, loop}}
        \tikzset{every loop/.style={-{Latex[length=2mm]}, min distance=10mm,in=220,out=310,looseness=1}}
      
        \node[vertex, label=below:$i$] (i) at (0,0) {};
        \node[vertex, label=below:$j$] (j) at (2,0) {};
        \node[vertex, label={$m \neq i,j$}] (m) at (0.75,2) {};
        \node[vertex, white] (w) at (3.5, 0){};
        
        \draw[edge, bend left, looseness=.5] (i) to (j);
        \draw[edge, bend left, looseness=.5] (j) to (i);
        \draw[edge, dashed, bend right, looseness=.5] (m) to (i);
        \draw[edge, dashed, bend right, looseness=.5] (i) to (m);
        \path[->] (i) edge [loop below, dashed] node {} ();
        \path[->] (j) edge [loop below, dashed] node {} ();
      \end{tikzpicture}
      \begin{tikzpicture}
       	\tikzset{vertex/.style = {shape = circle,fill = black,minimum size = 6pt,inner sep=0pt}}
        \tikzset{edge/.style = {-{Latex[length=2mm]}}}
        \tikzset{loopedge/.style = {-{Latex[length=2mm]}, in=260, out=210, loop}}
        \tikzset{every loop/.style={-{Latex[length=2mm]}, min distance=10mm,in=220,out=310,looseness=1}}
      
        \node[vertex, label=below:$i$] (i) at (0,0) {};
        \node[vertex, label=below:$j$] (j) at (2,0) {};
        \node[vertex, label={$m \neq i,j$}] (m) at (1.75,2) {};
        
        \draw[edge, bend right, looseness=.5] (i) to (j);
        \draw[edge, bend right, looseness=.5] (j) to (i);
        \draw[edge, dashed, bend left, looseness=.5] (m) to (j);
        \draw[edge, dashed, bend left, looseness=.5] (j) to (m);
        \path[->] (i) edge [loop below, dashed] node {} ();
        \path[->] (j) edge [loop below, dashed] node {} ();
      \end{tikzpicture}
      \begin{tikzpicture}
       	\tikzset{vertex/.style = {shape = circle,fill = black,minimum size = 6pt,inner sep=0pt}}
        \tikzset{edge/.style = {-{Latex[length=2mm]}}}
        \tikzset{loopedge/.style = {-{Latex[length=2mm]}, in=260, out=210, loop}}
        \tikzset{every loop/.style={-{Latex[length=2mm]}, min distance=10mm,in=220,out=310,looseness=1}}
      
        \node[vertex, label=below:$i$] (i) at (0,0) {};
        \node[vertex, label=below:$j$] (j) at (2,0) {};
        \node[vertex, label={$m \neq i,j$}] (m) at (1.25,2) {};
        \node[vertex, white] (w) at (-1, 0) {};
        
        \draw[edge, bend right, looseness=.5] (i) to (j);
        \draw[edge, bend right, looseness=.5] (j) to (i);
        \draw[loopedge, dashed] (m) to (m);
        \path[->] (i) edge [loop below, dashed] node {} ();
        \path[->] (j) edge [loop below, dashed] node {} ();
      \end{tikzpicture}}
    \caption{
Here, 
        $\rightarrow$ (resp.  $\dashrightarrow$) depicts an element in $R$ (resp. not in $R$). If $\Gamma(A)$ contains the top subgraph (resp. one of the bottom subgraphs), then Prop.~\ref{prop:vac iji}(1) (resp. (2)) applies for $x_i x_j x_i \in Q$.
}
\end{figure}
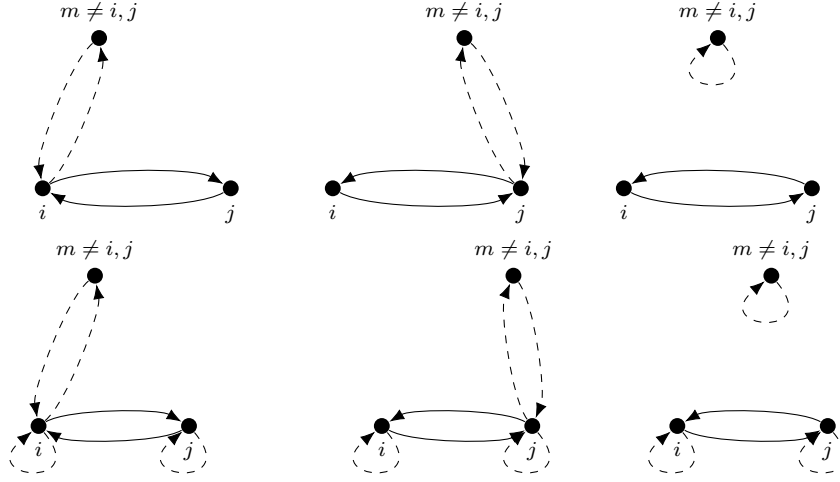

\begin{proof}   
   (1)
   Let $m \neq i,j$ and consider the following 3 cases.
   \begin{enumerate}
   	\item $x_ix_m \notin R$ and $x_mx_i \notin R$
   	\item $x_jx_m \notin R$ and $x_mx_j \notin R$
   	\item $x_i x_m \in R$ or $x_m x_i \in R$, but $x_m^2 \notin R$
   \end{enumerate}
   We will show that in any case, $\aconst m i j = \aconst m j i = \aconst m i i = 0$.
    
   In case (a), by applying Theorem~\ref{thm:main}(I) to $x_ix_jx_i$ we see that $\aconst m i j = \aconst m j i = 0$. 
   If $x_i^2 \notin R$, then by definition $\aconst m i i = 0$. Otherwise, if $x_i^2 \in R$, then we can apply Theorem~\ref{thm:main} to $x_i^3 \in Q$:  since $x_mx_i \notin R$ we have by Theorem~\ref{thm:main}(I.a)  that $\aconst m i i = 0$.   
   
   Similarly, in case (b), by applying (I) to $x_jx_ix_j$, the first two parameters listed are zero.
 If $x_i^2 \in R$, then by applying (I.b) to $x_j x_i x_i$, we get $\aconst m i i = 0$.
   
   Finally, in case (c), if $x_m x_i \in R$, then since $x_m^2 \notin R$, by applying (I.b) to $x_m x_i x_j$, we get $\aconst m i j = 0$.
   On the other hand, if $x_m x_i \notin R$, then applying (I.a) to $x_i x_j x_i$ yields $\aconst m i j = 0$.
   Similarly, regardless of whether $x_m x_j \in R$ or not, we get $\aconst m j i = 0$.
   Also, if $x_i^2 \in R$, then since $x_i x_m \in R$ or $x_m x_i \in R$, either $x_i^2 x_m \in R$ or $x_m x_i^2 \in R$.
   Therefore, by applying (I.a) to $x_i^2 x_m$ or by applying (I.b) to $x_m x_i^2$, we get $\aconst m i i = 0$.
   
   Thus, for any $m \neq i,j$, we have $\aconst m i j = \aconst m j i = \aconst m i i = 0$.  
  Therefore, for $r \neq i,j$,
  \begin{equation*}
    d^{i j i}_r
      = \sum_{ \substack{m \in \{1, \ldots, n\} \\ x_m x_i \in R}} \aconst m i j \aconst r m i \ 
        - \sum_{\substack{m' \in \{1, \ldots, n\} \\ x_i x_{m'} \in R}} \aconst r i {m'} \aconst {m'} j i 
      ~=~ \aconst j i j \aconst r j i - \aconst r i j \aconst j j i
      ~=~ 0.
  \end{equation*}
 Hence, (II) holds for $r \neq i,j$.
   
   (2)
   Now suppose also that condition (II) holds and that $x_i^2, x_j^2 \notin R$.
   Then by applying (I.c) to $x_i x_j x_i$ and $x_j x_i x_j$, we have $\aconst i i j = \aconst i j i$ and $\aconst j i j = \aconst j j i$, respectively.
   Thus, Theorem~\ref{thm:main}(III) reduces to showing that
$
     0 
     = \aconst j i j (\bconst j i - \bconst i j).
$
   Applying (II) to $x_i x_j x_i$ for $r=i$ yields that $\bconst j i - \bconst i j = \aconst j i j \aconst i j i - \aconst i i j \aconst j j i$, which is 0, by the above.
   Thus, (III) holds automatically.
\end{proof}

\begin{proposition}\label{prop:vac iii}
Let $x_i^3 \in Q$, and assume that Theorem~\ref{thm:main}\textup{(I)}  holds for each element of $Q$.
Suppose that for each $m\neq i$, one of the following conditions holds:
\[
	x_ix_m \notin R, \quad \text{or} \quad
	x_mx_i \notin R, \quad \text{or} \quad
	x_m^2 \notin R.
\]
Then, Theorem~\ref{thm:main}\textup{(II,III)}, applied to the element $x_i^3$, hold automatically.
\end{proposition}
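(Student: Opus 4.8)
The plan is to reduce everything to a single vanishing statement: under the stated hypotheses, $\aconst{m}{i}{i} = 0$ for every $m \neq i$. Once this is established, both (II) and (III) for $x_i^3$ collapse to trivial identities, so the entire content of the proposition lives in that one claim. Note first that $x_i^3 \in Q$ forces $x_i^2 \in R$, i.e. there is a loop at the vertex $i$ in $\Gamma(A)$; this is what makes the auxiliary elements below available in $Q$.

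To prove $\aconst{m}{i}{i} = 0$ for a fixed $m \neq i$, I would split according to the three hypotheses. If $x_m x_i \notin R$, then applying Theorem~\ref{thm:main}(I.a) to $x_i^3$ (where $i=j=k$) gives $\aconst{m}{i}{i}=0$ directly; if instead $x_i x_m \notin R$, the same conclusion comes from (I.b) applied to $x_i^3$. With these two parameters substituted into the formula for $d_r^{iii}$, both the $m$-sum (over $x_m x_i \in R$) and the $m'$-sum (over $x_i x_{m'}\in R$) retain only the $m=i$, resp. $m'=i$, term — each equal to $\aconst{i}{i}{i}\,\aconst{r}{i}{i}$ — and these cancel, yielding $d_r^{iii}=0$ for all $r$. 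Since $i=k$ here, condition (II)(b) requires exactly $d_r^{iii}=0$ for $r\neq i$ and $d_i^{iii}=\bconst{i}{i}-\bconst{i}{i}=0$, both of which now hold; running the identical bookkeeping on the expression in (III) leaves only $\aconst{i}{i}{i}\bconst{i}{i}-\bconst{i}{i}\aconst{i}{i}{i}=0$, so (III) holds as well.

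The main obstacle is the remaining case $x_i x_m, x_m x_i \in R$, in which (I.a) and (I.b) applied to $x_i^3$ give no information. Here the hypothesis forces the third alternative, $x_m^2 \notin R$, and the key move is to feed this into a \emph{different} basis element: since $x_i^2 \in R$ and $x_i x_m \in R$, the monomial $x_i^2 x_m$ lies in $Q$, and applying Theorem~\ref{thm:main}(I.a) to $x_i^2 x_m$ — whose running hypothesis is precisely $x_m x_m = x_m^2 \notin R$ — again yields $\aconst{m}{i}{i}=0$. Identifying this auxiliary element is the only non-mechanical step; everything downstream is the same cancellation already used in Propositions~\ref{prop:vac iik} and~\ref{prop:vac iji}.
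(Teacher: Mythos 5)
Your proposal is correct and follows essentially the same route as the paper's proof: the identical three-way case split to show $\aconst{m}{i}{i}=0$ for all $m \neq i$ (including the key use of the auxiliary element $x_i^2 x_m \in Q$ together with $x_m^2 \notin R$ and condition (I.a) in the hard case), followed by the same cancellation $d_r^{iii} = \aconst{i}{i}{i}\aconst{r}{i}{i} - \aconst{r}{i}{i}\aconst{i}{i}{i} = 0$ and the analogous collapse of (III). Your explicit remark that (II)(b) with $i=k$ and $j=i$ demands only $\bconst{i}{i}-\bconst{i}{i}=0$ at $r=i$ makes the "vacuous" claim of the paper slightly more transparent, but the argument is the same.
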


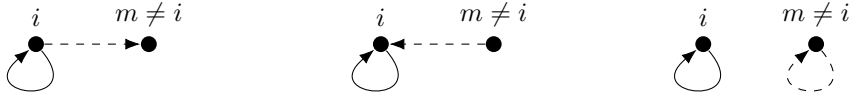
\begin{figure}[h]
      \begin{tikzpicture}
        \tikzset{vertex/.style = {shape = circle,fill = black,minimum size = 6pt,inner sep=0pt}}
        \tikzset{edge/.style = {-{Latex[length=2mm]}}}
       \tikzset{loopedge/.style = {-{Latex[length=2mm]}, in=220, out=140, loop}}
       \tikzset{every loop/.style={-{Latex[length=2mm]}, min distance=10mm,in=220,out=310,looseness=1}}
      
        \node[vertex, label= $i$] (i) at (0,0) {};
        \node[vertex, label= {$m\neq i$}] (m) at (1.5,0) {};
        \node[vertex, white](e1) at (3.5,0){};
        
        \path[->] (i) edge [loop below] node {} ();
        \draw[edge, dashed] (i) to (m);
      \end{tikzpicture}
      \begin{tikzpicture}
        \tikzset{vertex/.style = {shape = circle,fill = black,minimum size = 6pt,inner sep=0pt}}
        \tikzset{edge/.style = {-{Latex[length=2mm]}}}
        \tikzset{loopedge/.style = {-{Latex[length=2mm]}, in=220, out=140, loop}}
        \tikzset{every loop/.style={-{Latex[length=2mm]}, min distance=10mm,in=220,out=310,looseness=1}}
      
        \node[vertex, label=$i$] (i) at (0,0) {};
        \node[vertex, label= {$m \neq i$}] (m) at (1.5,0) {};
        
        \path[->] (i) edge [loop below] node {} ();
        \draw[edge, dashed] (m) to (i);
      \end{tikzpicture}
      \begin{tikzpicture}
        \tikzset{vertex/.style = {shape = circle,fill = black,minimum size = 6pt,inner sep=0pt}}
        \tikzset{edge/.style = {-{Latex[length=2mm]}}}
        \tikzset{loopedge/.style = {-{Latex[length=2mm]}, in=220, out=140, loop}}
        \tikzset{every loop/.style={-{Latex[length=2mm]}, min distance=10mm,in=220,out=310,looseness=1}}
      
        \node[vertex, white](e2) at (0,0) {};
        \node[vertex, label=  $i$] (i) at (2,0) {};
        \node[vertex, label=  {$m \neq i$}] (m) at (3.5,0) {};

        \path[->] (i) edge [loop below] node {} ();
        \path[->] (m) edge [loop below, dashed] node {} ();
      \end{tikzpicture}
      \caption{Here, 
        $\rightarrow$ (resp.  $\dashrightarrow$) depicts an element in $R$ (resp. not in $R$). If, for each $m \neq  i$, $\Gamma(A)$ contains one of the above subgraphs, then Proposition \ref{prop:vac iii} applies for $x_i^3 \in Q$.
}
\end{figure}

These conditions are pictured as subgraphs of $\Gamma(A)$ in Figure~7.

\begin{proof}
Let $m \neq  i$ and consider the following three cases:
$$ \textnormal{(a)}~ x_ix_m \notin R, \quad  \quad 
 \textnormal{(b)} ~x_mx_i \notin R \quad \quad
\textnormal{(c)}~ x_ix_m, ~x_mx_i \in R \text{ and } x_m^2 \notin R.$$

We will show that in each case, we get $\aconst m i i  = 0$. 
In case (a), $x_ix_m\notin R$, so applying Theorem \ref{thm:main}(I.b) to $x_i^3$ will yield $\aconst m i i =0$.
Similarly in case (b), $x_mx_i \notin R$, so applying (I.a) to $x_i^3$ will yield $\aconst m i i = 0$.
In case (c), we have $x_ix_m, x_mx_i \in R$ and $x_m^2 \notin R$. 
Therefore, $x_i^2x_m \in Q$. 
Since $x_m^2 \notin R$, applying (I.a) to $x_i^2 x_m$ will yield $\aconst m i i = 0$.

Therefore, for all $r$:
\[
  d^{iii}_r 
    = \sum_{ \substack{m \in \{1, \ldots, n\} \\ x_m x_i \in R}} \aconst m i i \aconst r m i \ 
          - \sum_{\substack{m' \in \{1, \ldots, n\} \\ x_i x_{m'} \in R}} \aconst r i {m'} \aconst {m'} i i 
    ~=~ \aconst i i i \aconst r i i - \aconst r i i \aconst i i i 
    ~=~ 0.
\]
 Thus Theorem (II) applied to $x_i^3$ is vacuous.

Similarly, since $\aconst m i i = 0$ in all cases, Theorem~\ref{thm:main}(III), applied to $x_i^3$, holds automatically.
\end{proof}
\section{Examples and Applications} \label{sect:examples}

The goal of this section is to illustrate Theorem~\ref{thm:main} by studying PBW deformations of several classes of quadratic monomial algebras $A = T(V)/(R)$. 
In particular, we will show how the theorem can be applied to the basis $Q$ of $RV \cap VR$ [Lemma~\ref{lem:RVVRbasis}] or, equivalently, to the graph $\Gamma(A)$ [Notation \ref{notation:gamma(A)}] in our computations.
We begin with the most trivial example: $Q = \emptyset$.

\begin{example} \label{ex:RVVR empty}
  Suppose $A = \kk \langle x_1, \dots, x_n \rangle / (R)$ such that $Q = \emptyset$ (equivalently, $RV \cap VR = 0$). 
  That is,  $\Gamma(A)$ contains no paths of length two. 
  So, Theorem~\ref{thm:main} yields no restrictions on the filtration parameters of $A$ to be deformation parameters, and thus any choice of $a_{m}^{ij}$ and $b^{ij}$, for each $x_ix_j \in R$, yields a PBW deformation of~$A$.
\end{example}

Now we study filtration parameters in the context of connected components of~$\Gamma(A)$.

\begin{lemma}\label{lem:connected components}
  Let $A = \kk \langle x_1, \dots, x_n \rangle / (R)$, for $R$ the span of some set of quadratic monomials in the $x_i$, and let $\Gamma(A)$ denote the corresponding graph as in Notation~\ref{notation:gamma(A)}.
  Let $\Gamma_1$ be a connected component of $\Gamma(A)$.
  Then Theorem~\ref{thm:main} applied to any path of length two in $\Gamma_1$ will only yield restrictions on the filtration parameters $\aconst m i j$ and $\bconst i j$ corresponding to arrows $i \to j$ in $\Gamma_1$; the filtration parameters associated to arrows in other components remain unaffected.
  
  Moreover, if every arrow $i \to j$ in $\Gamma_1$ is part of a path of length two, then for filtration parameters to yield a PBW deformation, we must have $\aconst m i j = 0$ for all vertices $m \in \Gamma \setminus \Gamma_1$.
\end{lemma}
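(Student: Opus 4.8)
The plan is to treat the two assertions separately, reducing each to a careful bookkeeping of which filtration parameters appear in the instances of conditions~\textup{(I)}--\textup{(III)} of Theorem~\ref{thm:main}, and of which arrow each such parameter belongs to. The one graph-theoretic fact I will use repeatedly is the defining property of a (weakly) connected component: if $u \to v$ is an arrow of $\Gamma(A)$ and one of $u,v$ lies in $\Gamma_1$, then both do, so the whole arrow lies in $\Gamma_1$.

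For the first assertion, I would fix a length-two path $i \to j \to k$ inside $\Gamma_1$, so that $x_i x_j x_k \in Q$ and $i,j,k \in \Gamma_1$. Reading off the statement of Theorem~\ref{thm:main}, the filtration parameters occurring in the instances of conditions~\textup{(I)}, \textup{(II)}, \textup{(III)} for $x_i x_j x_k$ are precisely $\aconst{\bullet}{i}{j}$, $\aconst{\bullet}{j}{k}$, the parameters $\aconst{\bullet}{m}{k}$ for $x_m x_k \in R$, and $\aconst{\bullet}{i}{m'}$ for $x_i x_{m'} \in R$, together with $\bconst{i}{j}$, $\bconst{j}{k}$, $\bconst{m}{k}$, and $\bconst{i}{m'}$ for the same index sets. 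The arrows $i \to j$ and $j \to k$ lie in $\Gamma_1$ by hypothesis; each arrow $m \to k$ with $x_m x_k \in R$ lies in $\Gamma_1$ because $k \in \Gamma_1$, and each arrow $i \to m'$ with $x_i x_{m'} \in R$ lies in $\Gamma_1$ because $i \in \Gamma_1$, both by the connected-component property. Hence every parameter constrained by Theorem~\ref{thm:main} on $x_i x_j x_k$ is attached to an arrow of $\Gamma_1$, and parameters belonging to arrows of other components simply never appear.

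For the second assertion I assume $D$ is a PBW deformation, so that condition~\textup{(I)} holds for every element of $Q$, and I fix an arrow $i \to j$ of $\Gamma_1$ together with a vertex $m \in \Gamma \setminus \Gamma_1$; the goal is $\aconst{m}{i}{j} = 0$. Since $i \to j$ lies on a length-two path, either (Case~1) there is $k$ with $j \to k$, whence $x_i x_j x_k \in Q$, or (Case~2) there is $h$ with $h \to i$, whence $x_h x_i x_j \in Q$. In Case~1 I apply \textup{(I.a)} to $x_i x_j x_k$: as $i, k \in \Gamma_1$ while $m \notin \Gamma_1$ we have $m \neq i$, and $x_m x_k \notin R$ (otherwise $m \to k$ would be an arrow with $k \in \Gamma_1$, forcing $m \in \Gamma_1$), so \textup{(I.a)} gives $\aconst{m}{i}{j} = 0$. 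In Case~2 I apply \textup{(I.b)} to $x_h x_i x_j$, that is, to Theorem~\ref{thm:main} with the roles of $i,j,k$ played by $h,i,j$: here $m \neq j$ and $x_h x_m \notin R$ (else $h \to m$ would force $m \in \Gamma_1$, since $h \in \Gamma_1$), so \textup{(I.b)} again gives $\aconst{m}{i}{j} = 0$. Ranging over all arrows of $\Gamma_1$ and all $m \notin \Gamma_1$ completes the argument.

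The proof is essentially bookkeeping, so the step I would guard most carefully is the index management in the second assertion: one must check that the vertex $m \notin \Gamma_1$ never coincides with the index excluded by the relevant part of~\textup{(I)} (namely $m \neq i$ in Case~1 and $m \neq j$ in Case~2), and that the needed non-relation ($x_m x_k \notin R$, respectively $x_h x_m \notin R$) indeed holds; both follow cleanly from $m \notin \Gamma_1$ via the connected-component property. It is also worth emphasizing that the hypothesis that every arrow of $\Gamma_1$ lies on a length-two path is exactly what guarantees the existence of the auxiliary vertex $k$ or $h$, and hence what allows \textup{(I.a)} or \textup{(I.b)} to be invoked at all.
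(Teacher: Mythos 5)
Your proof is correct and follows essentially the same route as the paper: the first assertion is the same bookkeeping that every parameter constrained by Theorem~\ref{thm:main} on $x_i x_j x_k$ is attached to an arrow incident to $i$, $j$, or $k$ (hence in $\Gamma_1$), and the second assertion uses exactly the paper's two cases, applying \textup{(I.a)} to $i \to j \to k$ and \textup{(I.b)} to $h \to i \to j$, with the non-relation $x_m x_k \notin R$ (resp.\ $x_h x_m \notin R$) justified by the connected-component property. Your version is merely more explicit about the index checks ($m \neq i$, $m \neq j$), which the paper leaves implicit.
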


\begin{proof}
  Let $i \to j \to k$ be a path of length two in $\Gamma_1$.
  Theorem~\ref{thm:main} applied to $x_i x_j x_k$ gives conditions only on filtration parameters corresponding to arrows with $i$, $j$, or $k$ as the source or target, which are all arrows in $\Gamma_1$.
  For the second statement, let $i \to j$ be an arrow in $\Gamma_1$.
  If we have a path of length two $i \to j \to k$ in $\Gamma$, then by Theorem~\ref{thm:main}(I.a), $\aconst m i j = 0$; indeed, there is no arrow $m \to k$ for $m \in \Gamma \setminus \Gamma_1$.
  If, on the other hand, we have $h \to i \to j$, then by Theorem~\ref{thm:main}(I.b), $\aconst m i j = 0$.
\end{proof}

\begin{remark} \label{rem:connected}
   By Lemma~\ref{lem:connected components}, in order to compute PBW deformations of a quadratic monomial algebra $A$, we may compute the deformation restrictions for each connected component of $\Gamma(A)$ separately, so long as each arrow  is part of a path of length two.
\end{remark}

\begin{example} \label{ex:loops}
  Consider the algebra $A =  \kk \langle x_1, \dots, x_n \rangle / (R)$, where $R$ only contains elements of the form $x_i^2$, i.e. $\Gamma(A)$ only contains loops. 
  By Remark~\ref{rem:connected}, in order to compute deformation parameters of $A$ it suffices to compute the deformations of the algebras $A^i := \kk\langle x_i \rangle/ (x_i^2)$ for each loop at $i$ in $\Gamma(A)$. So we proceed as such.
  
  Now take $A^i:=\kk\langle x_i \rangle/ (x_i^2)$.
  Then $\Gamma (A^i)$ is the following:
  \vspace{1mm}
  
  \begin{center}
        \begin{tikzpicture}
          \tikzset{vertex/.style = {shape = circle,fill = black,minimum size = 6pt,inner sep=0pt}}
          \tikzset{edge/.style = {-{Latex[length=2mm]}}}
          \tikzset{loopedge/.style = {-{Latex[length=2mm]}, in=60, out=120, loop}}
          \tikzset{every loop/.style={-{Latex[length=2mm]}, min distance=15mm,in=220,out=310,looseness=1}}
  	\node[vertex, label=below:\small{$i$}](1) at (0,0) {};
  	
  	\path[->] (1) edge [loop below] node {} ();;
  	 \end{tikzpicture}
  \end{center}
  Observe that $Q = \{x_i^3\}$.
  We have only two filtration parameters, $\aconst iii$ and $\bconst ii$.
  For any choice of these, condition (I) of Theorem~\ref{thm:main} is vacuously true, and by Proposition~\ref{prop:vac iii}, conditions (II) and (III) of Theorem~\ref{thm:main} hold automatically.
  
  Hence, any choice of $\aconst m i i, \bconst i i \in \kk$ with $\aconst {m \neq i} i i = 0$ yields a PBW deformation of $A$.
  (Recall that the scalars $\aconst m i j$ and $\bconst i j$ for $i\neq j$ do not play a role in such a computation since $x_ix_j \notin R$.)
\end{example}

\begin{example}\label{ex:path}

Let $A = \kk \langle x_1, \dots, x_n \rangle / (x_{p_1}x_{p_2},~ x_{p_2}x_{p_3}, \dots , ~x_{p_{t-1}}x_{p_t})$, with $x_{p_i}$ all distinct and $n \geq t \geq 3$. 
Reorder these indices so that $p_d = d$ for $1\leq d \leq t$. 
In light of Remark~\ref{rem:connected}, in order to compute deformation parameters of $A$, it suffices to compute the deformations of $\widetilde A := \kk \langle x_1, \ldots, x_t \rangle / (x_1 x_2,~ x_2 x_3, ~\ldots, ~x_{t-1} x_t)$.
So we proceed as such.
 
With $\widetilde A$ as above, $\Gamma(\widetilde A)$ can be visualized as a path of length $t-1$: 

\begin{figure}[h!]
\begin{center}
      \begin{tikzpicture}
        \tikzset{vertex/.style = {shape = circle,fill = black,minimum size = 6pt,inner sep=0pt}}
        \tikzset{edge/.style = {-{Latex[length=2mm]}}}
        \tikzset{loopedge/.style = {-{Latex[length=2mm]}, in=45, out=135, loop}}

	\node[vertex, label=below:$1$](1) at (-4.5,1) {};
	\node[vertex, label=below:$2$](2) at (-3,1) {};
	\node[vertex, label=below:$3$](3) at (-1.5,1) {};
	\node[vertex, white] (4) at (0,1) {};
	\node[vertex, white] (5) at (1.5,1) {};
	\node[vertex, label=below:$t-2$](t-2) at (3,1) {};
	\node[vertex, label=below:$t-1$](t-1) at (4.5,1) {};
	\node[vertex, label=below:$t$](t) at (6,1) {};

	\draw[edge] (1) to (2);
	\draw[edge] (2) to (3);
	\draw[edge] (3) to (4);
	\draw[edge] (5) to (t-2);
	\draw[edge] (t-2) to (t-1);
	\draw[edge] (t-1) to (t);
	\path (4) -- node[auto=false]{\dots \ldots} (5);

         \end{tikzpicture}
\end{center}
\end{figure}
\noindent We will apply the conditions of Theorem~\ref{thm:main} for an element $x_i x_j x_k$ in $Q$, that is, $j = i+1$ and $k=i+2$. 
Note that $i$, $j$, and $k$ are distinct, so for each $m \neq i,j$, we get $x_m x_k \not \in R$. Also, for $m' \neq j,k$, we get $x_i x_{m'} \not \in R$. 
Therefore, Theorem~\ref{thm:main}(I) gives us that
\begin{equation} \label{eq:str}
  \aconst mij = \aconst {m'}jk = 0, \text{ for } m \neq i,j \text{ and } m' \neq j, k.
\end{equation}
Furthermore, since $x_i x_k \not \in R$, we also obtain  that
\begin{equation} \label{eq:pathIc}
  \aconst iij = \aconst kjk .
\end{equation}
Assuming the above restrictions, Theorem~\ref{thm:main}(II) only needs to be checked for $r=i,j,k$, due to Proposition~\ref{prop:vac ijk}.
If $r=i$, condition (II) gives that $\bconst jk = \aconst j i j \aconst i j k - \aconst i i j \aconst j j k.$
But by \eqref{eq:str} we get $\aconst i j k = 0$. So, $\bconst j k = -\aconst j j k \aconst i i j$. 
By a similar argument for the case $r = k$, we obtain that $\bconst i j = -\aconst j i j \aconst k j k$.
By \eqref{eq:pathIc}, these give the following restriction:
\begin{equation} \label{eq:bstr}
  \bconst ij = -\aconst i i j \aconst  j i j \text{ for all $i \to j$}.
\end{equation}
When $r=j$, we obtain the vacuous condition $\aconst j i j \aconst  j j k - \aconst j j k \aconst j i j =0$, yielding no new restrictions. 
Lastly, observe that $x_i^2, x_k^2 \not \in R$, so Proposition~\ref{prop:vac ijk} gives that Theorem~\ref{thm:main}(III) is satisfied automatically. 
Therefore, the set of restrictions on the filtration parameters given by the relations $x_ix_j, x_jx_k \in R$ with $k = j+1=i+2$ to be deformation parameters are given by \eqref{eq:str}, \eqref{eq:pathIc}, and \eqref{eq:bstr}.
\end{example}

\begin{example}\label{ex:cycle}
We now suppose that 
\begin{gather*}
 A = \kk \langle x_1, \dots, x_n \rangle / (x_{p_1}x_{p_2},~x_{p_2}x_{p_3},~\dots,~x_{p_{t-1}}x_{p_t},~x_{p_t}x_{p_1}),
\end{gather*}
in which the $x_{p_i}$ are all distinct, with $n \geq t \geq 3$.  
As in the previous example, we will reorder the $p_i$ as $1,2,\dots,t$, and it will suffice to compute the deformations of $\widetilde A := \kk \langle x_1, \ldots, x_t \rangle / (x_1, x_2, \ldots, x_{t-1} x_t, x_t x_1)$. 
The graph $\Gamma (\widetilde A)$ will be a cycle with $t$ nodes:

\vspace{.1in}
\begin{center}
      \begin{tikzpicture}
        \tikzset{vertex/.style = {shape = circle,fill = black,minimum size = 6pt,inner sep=0pt}}
        \tikzset{edge/.style = {-{Latex[length=2mm]}}}
        \tikzset{loopedge/.style = {-{Latex[length=2mm]}, in=45, out=135, loop}}

	\node[vertex, label=left:$1$](1) at (-4.5,1) {};
	\node[vertex, label=left:$2$](2) at (-4,.0) {};
	\node[vertex, label=below:$3$](3) at (-2,0) {};
	\node[vertex, label=below:$t-2$](t-2) at (2,0) {};
	\node[vertex, label=right:$t-1$](t-1) at (4,0) {};
	\node[vertex, label=right:$t$](t) at (4.5,1) {};
	\node[](e1) at (-1,0) {};
	\node[](e2) at (1,0) {};

	\draw[edge, bend right=10] (1) to (2);
	\draw[edge, bend right=10] (2) to (3);
	\draw[edge, bend right=10] (t-2) to (t-1);
	\draw[edge, bend right=10] (t-1) to (t);
	\draw[edge, bend right=10](t) to (1);
	\draw[edge, bend right=10](3) to (e1);
	\draw[loosely dashed] (e1) to (e2);
	\draw[edge, bend right=10] (e2) to (t-2);
         \end{tikzpicture}
\end{center}
This may be viewed as an extension of Example \ref{ex:path}, since we are appending one additional arrow from the node $p_t$ to the node $p_1$.
All of the conditions previously satisfied in Theorem~\ref{thm:main} and Proposition \ref{prop:vac ijk} are again satisfied in this example.
Proposition~\ref{prop:vac ijk} yields restrictions \eqref{eq:str}, \eqref{eq:pathIc}, \eqref{eq:bstr} on the filtration parameters for $x_ix_j, x_j x_k \in R$ to be deformation parameters.
The element $x_t x_1 \in R$ adds only the extra restriction: $\aconst {t - 1} {t - 1,} t = \aconst 1 t 1$ and $ \aconst t t 1 = \aconst 2 1 2$.
\end{example}

We will now provide an example in which the results in Section 4 do not apply to an element of $Q$.
\begin{example}\label{ex:il}
Let $A = \kk \langle x_1, x_2, x_3, x_4 \rangle / (x_1x_2,~ x_2x_3,~ x_1x_4)$, so that $\Gamma(A)$  is
 \begin{figure}[h!]
    \begin{center}
      \begin{tikzpicture}
        \tikzset{vertex/.style = {shape = circle,fill = black,minimum size = 6pt,inner sep=0pt}}
        \tikzset{edge/.style = {-{Latex[length=2mm]}}}
        \tikzset{loopedge/.style = {-{Latex[length=2mm]}, in=45, out=135, loop}}
        \tikzset{every loop/.style={-{Latex[length=2mm]}, min distance=20mm,in=220,out=310,looseness=1}}
      
        \node[vertex, label=below:$1$] (a) at (0,2) {};
        \node[vertex, label=below:$2$] (b) at (2,2) {};
        \node[vertex, label=below:$3$] (c) at (4,2) {};
        \node[vertex, label=below:$4$] (d) at (2,1) {};
        
        \draw[edge] (a) to (b);
        \draw[edge] (b) to (c);
        \draw[edge] (a) to (d);
      \end{tikzpicture}
    \end{center}
  \end{figure}
  
  \vspace{-.1in}

\noindent We have that $x_1 x_2 x_3$ is the only element of $Q$, so the conditions of Theorem~\ref{thm:main} need only be checked for this element. 
Since $x_m x_3 \not \in R$ for $m=3,4$, Theorem~\ref{thm:main}(I.a) gives that $\aconst 3 1 2 = \aconst 4 1 2 = 0$. 
Since $x_1^2 \not\in R$, (I.b) gives that $\aconst 1 2 3 = 0$. 
Lastly, since $x_1 x_3 \not \in R$, (I.c) gives that $\aconst 1 1 2 = \aconst 3 2 3$. 

For (II), we note that 
$d_r^{123} = \aconst 2 1 2 \aconst r 2 3 - \aconst r 1 2 \aconst 2 2 3 - \aconst r 1 4 \aconst 4 2 3$.
Thus, for $r=1$, we get $\bconst 2 3 = \aconst 2 1 2 \aconst 1 2 3 - \aconst 1 1 2 \aconst2 2 3 - \aconst 1 1 4 \aconst 4 2 3$.
Now since $\aconst 1 2 3 = 0$ by the preceding, we conclude that 
\begin{equation} \label{eq:r=1}
\bconst 2 3 = -\aconst 1 1 2 \aconst2 2 3 - \aconst 1 1 4 \aconst 4 2 3.
\end{equation}
In a similar fashion, setting $r=2$ gives that 
\begin{equation} \label{eq:r=2}
  -\aconst 2 1 4 \aconst 4 2 3 = 0.
\end{equation}
For $r=3$, we obtain $-\bconst 1 2 = \aconst 2 1 2 \aconst 3 2 3 - \aconst 3 1 2 \aconst 2 2 3 -\aconst 3 1 4 \aconst 4 2 3$,
and since $\aconst 3 1 2 = 0$, this gives 
\begin{equation} \label{eq:r=3}
\bconst 1 2 = -\aconst 2 1 2 \aconst 3 2 3 +\aconst 3 1 4 \aconst 4 2 3.
\end{equation}
Lastly, when $r=4$, we get
\begin{equation} \label{eq:r=4}
 \aconst 2 1 2 \aconst 4 2 3 -\aconst 4 1 4 \aconst 4 2 3 = 0.
\end{equation}

Now we need only check condition (III) of Theorem~\ref{thm:main}, which is equivalent to
$\aconst 2 1 2 \bconst 2 3 - \aconst 2 2 3 \bconst 12 - \aconst 4 2 3 \bconst 1 4 = 0.$
By \eqref{eq:r=1} and \eqref{eq:r=3}, along with $\aconst 1 1 2 = \aconst 3 2 3$ from above, we get that
\begin{equation}\label{eq:bcond}
\aconst 2 1 2 \aconst 1 1 4 \aconst 4 2 3 + \aconst 2 2 3 \aconst 3 1 4 \aconst 4 2 3 + \aconst 4 2 3 \bconst 1 4 = 0.
\end{equation}

Finally, we conclude that all PBW deformations of $A$ are of the form
\begin{gather*}
D =\kk \langle x_1, x_2, x_3, x_4 \rangle / (P),
\end{gather*}
where
\begin{gather*}
P = \kspan
\left(
\begin{array}{c|c}
\begin{array}{l}
x_1x_2 - \aconst 1 1 2 x_1 - \aconst 2 1 2 x_2 + \aconst 1 1 2 \aconst 2 1 2  -\aconst 3 1 4 \aconst 4 2 3, 
\\ 
x_2x_3  - \aconst 2 2 3 x_2 - \aconst 1 1 2 x_3 -\aconst 4 2 3 x_4  + \aconst 1 1 2 \aconst 2 2 3 + \aconst 1 1 4 \aconst 4 2 3,\\
x_1x_4 - \aconst 1 1 4 x_1 - \aconst 2 1 4 x_2 - \aconst 3  1 4 x_3 - \aconst 4 1 4 x_4 - \bconst 1 4
\end{array} &
\begin{array}{l}
\eqref{eq:r=2},\\ \eqref{eq:r=4},\\
\eqref{eq:bcond}
\end{array}
\end{array}
\right).
\end{gather*}
\end{example}


\section{Existence of Nontrivial Deformations} \label{sect:nontrivialdefs}

The purpose of this section is to show that each quadratic monomial algebra admits a nontrivial PBW deformation; we establish this result by using  Theorem~\ref{thm:main}. Let us recall some notation from Section~\ref{sect:main}. Take $A := \kk \langle x_1, \dots, x_n \rangle/(R)$ for $R$ the $\kk$-span of a collection of quadratic monomials in the $x_i$. Let $D := \kk \langle x_1, \dots, x_n \rangle/(P)$ for $P = \{x_ix_j - \sum_{m=1}^n a^{ij}_m x_m - b^{ij}~|~x_ix_j \in R\}$ with $\{a^{ij}_m\}$ and $\{b^{ij}\}$ a collection of scalars in $\kk$ called {\it filtration parameters}. Such filtration parameters are referred to as {\it deformation parameters} in the case when $D$ is a PBW deformation of $A$. Denote by $V$ the $\kk$-span of the $x_i$, and let $Q$ denote the basis of the space $RV \cap VR$.

\begin{theorem} \label{thm:nontrivialdefs}
Retain the notation above. Then any quadratic monomial algebra $A$ admits a nontrivial PBW deformation  $D$.
\end{theorem}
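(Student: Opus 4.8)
The plan is to write down, for any quadratic monomial algebra $A$ with $R\neq 0$, an \emph{explicit} nonzero choice of filtration parameters and to verify directly that it satisfies conditions (I)--(III) of Theorem~\ref{thm:main}; the choice depends only on whether $\Gamma(A)$ has a loop. (If $R=0$ then $A$ is free and there are no parameters to turn on, so I assume $R\neq 0$.) The point is that one never has to understand the global shape of $\Gamma(A)$: a single uniform recipe works, except that loops must be treated separately. In particular I do not expect to need the connected-component reduction of Lemma~\ref{lem:connected components}, since each element of $Q$ is a length-three path lying in one component and the recipe is the same everywhere.

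First, suppose $x_i^2\in R$ for some $i$, so $\Gamma(A)$ has a loop at $i$. I would set $\aconst i i i=1$ and put every other parameter (all other $\aconst m p q$ and all $\bconst p q$) equal to $0$. Condition (I) is then immediate: the off-diagonal requirements in (I.a)/(I.b) concern only parameters already set to $0$ (the slot $\aconst i i i$ is always the excluded $m=i$ or $m'=k$ term), and every instance of (I.c) that could involve $\aconst i i i$ is vacuous because the monomial $x_ix_{k}$ (resp. $x_{h}x_i$) forced by the relevant path lies in $R$. For (II) and (III), since $\aconst i i i$ is the only nonzero parameter, each $d_r^{pqs}$ is a sum of products of two parameters and hence vanishes unless the path is $x_i^3$, where a direct computation gives $d^{iii}_r=\aconst i i i\,\aconst r i i-\aconst r i i\,\aconst i i i=0$; as all $\bconst p q=0$, the remaining instances of (II) and (III) read $0=0$. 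This is a PBW deformation, nontrivial because $\aconst i i i\neq 0$.

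Second, suppose $\Gamma(A)$ has no loop, so $x_p^2\notin R$ for all $p$. Here I would use a uniform deformation: for every arrow $i\to j$ (every $x_ix_j\in R$) set the two diagonal parameters $\aconst i i j=\aconst j i j=1$, set $\aconst m i j=0$ for $m\notin\{i,j\}$, and set $\bconst i j=-1$; all parameters attached to non-arrows are $0$. For any $x_ix_jx_k\in Q$ we have $i\neq j$ and $j\neq k$, and (I) holds: in (I.a) the only surviving diagonal term is $\aconst j i j$ with $x_jx_k\in R$, in (I.b) it is $\aconst j j k$ with $x_ix_j\in R$, so neither must vanish, while (I.c) reads $\aconst i i j=\aconst k j k$, i.e. $1=1$. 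The heart of the argument is the evaluation of $d_r^{ijk}$: a short computation shows that the terms coming from a possible shortcut arrow $i\to k$ (present when $x_ix_k\in R$) cancel in pairs, leaving, for distinct $i,j,k$, only $d_i^{ijk}=-1$ and $d_k^{ijk}=1$ with all other $d_r^{ijk}=0$, and leaving $d_r^{iji}=0$ for all $r$ when $i=k$. Comparing with (II) then demands exactly $\bconst j k=d_i^{ijk}=-1$ and $\bconst i j=-d_k^{ijk}=-1$ in the distinct case, while condition (II.b) reads $\bconst j i-\bconst i j=0$ in the case $i=k$; both hold for the uniform choice. Finally (III) collapses by the same pairwise cancellation to $0=0$, giving a nontrivial PBW deformation.

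The main obstacle is the no-loop case, namely the verification of (II) and (III) for the uniform deformation. Two points are delicate: first, that the contributions of shortcut arrows (triples $x_ix_j,\,x_jx_k,\,x_ix_k$ all in $R$) cancel inside $d_r^{ijk}$ and inside (III), so the recipe is insensitive to such chords; and second, that degenerate paths $x_ix_jx_i$ arising from $2$-cycles satisfy the weaker condition (II.b) rather than (II.a). I also expect to need the observation that a loop genuinely breaks the uniform construction: evaluating $d_r^{iik}$ on a path $i\to i\to k$ forces $\bconst i i=0$ through (II.a), which is incompatible with $\bconst i i=-1$. This is precisely why loops must be peeled off into the separate, simpler first case rather than folded into the uniform one.
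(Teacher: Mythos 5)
Your proof is correct, but in the loop-free case it takes a genuinely different route from the paper's. The loop case coincides with the paper's Case 1 (one parameter $\aconst{1}{1}{1}=1$, all else zero). In the absence of loops, however, the paper splits further into two more cases: if some $2$-cycle $x_sx_t,\,x_tx_s\in R$ exists, it activates parameters only at the two vertices $s,t$ (setting $\aconst{p}{p}{s}$, $\aconst{q}{s}{q}$, $\aconst{p}{p}{t}$, $\aconst{q}{t}{q}$ equal to $1$ for the relevant neighboring vertices $p,q$, and $\bconst{s}{t}=\bconst{t}{s}=-1$); if no $2$-cycle exists, it activates parameters only at a single vertex $u$ with $\mc L_u\cup\mc R_u\neq\emptyset$ and keeps every $\bconst{i}{j}=0$. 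Each of these localized choices then requires a sub-case analysis over how many of the indices $i,j,k$ of a path lie in the distinguished vertex set. Your uniform recipe ($\aconst{i}{i}{j}=\aconst{j}{i}{j}=1$, $\bconst{i}{j}=-1$ on every arrow) handles both of the paper's loop-free cases at once, and your key computation is right: the chord terms indexed by a possible arrow $i\to k$ cancel, leaving $d_r^{ijk}=\aconst{r}{j}{k}-\aconst{r}{i}{j}$, which produces exactly the values demanded by (II.a)/(II.b), and the same cancellation makes (III) read $0=0$. What your approach buys is economy: one formula, no $2$-cycle dichotomy, no bookkeeping of distinguished vertices. What the paper's buys is sparsity of the deformation (supported at one or two vertices), which the theorem does not need. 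A remark that illuminates both proofs: your loop-free deformation replaces each relation $x_ix_j$ by $(x_i-1)(x_j-1)$, i.e., it is the image of $R$ under the filtered substitution $x_m\mapsto x_m-1$, and the paper's Cases 2 and 3 arise from the same substitution applied only to the distinguished variables; your observation that $d_k^{iik}=-\bconst{i}{i}$ obstructs this at a loop explains why the loop case instead uses the one-sided replacement $x_1^2\mapsto x_1(x_1-1)$. Finally, your explicit exclusion of $R=0$ (where no nonzero filtration parameters exist) makes precise a hypothesis the paper leaves tacit.
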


  \begin{proof}
    We proceed by cases:
    In each case, we exhibit filtration parameters $\{ \aconst m i j \}$ and $\{ \bconst i j \}$, not all zero, and show they satisfy the conditions of Theorem~\ref{thm:main} to be deformation parameters of $A$.
    
\vspace{.2in} 

($\star$) 

\vspace{-.3in} 

\begin{quote} 
We assume that $\aconst m i j = 0$ for all $m \neq i,j$ so that conditions (I.a), (I.b) of Theorem~\ref{thm:main} are satisfied.
\end{quote}
 
 \medskip 
 \noindent Next, the following reduction will be of use.   

\vspace{.2in} 

($\star \star$)

\vspace{-.3in} 

\begin{quote}
Note that if all $\bconst i j = 0$, then both Theorem~\ref{thm:main}(III) is satisfied and condition~(II) of Theorem~\ref{thm:main} is reduced to showing that 
    $$d^{i j k}_r :=  \sum_{ \substack{m \in \{1, \ldots, n\} \\ x_m x_k \in R}} \aconst m i j \aconst r m k \ - \sum_{\substack{m' \in \{1, \ldots, n\} \\ x_i x_{m'} \in R}} \aconst r i {m'} \aconst {m'} j k $$
    equals 0 for all $r = 1, \dots, n$.
 \end{quote}

    \medskip
    
   \noindent {\bf \underline{Case 1}}:
    Suppose for some $\ell \in \{1, 2, \ldots, n \}$ that $x_\ell^2 \in R$.
    Without loss of generality, we reindex so that $\ell = 1$.
    Then $x_1^3 \in Q$.
Set all filtration parameters of $A$ to be zero, except $\aconst 1 1 1 = 1$.

 Fix $i$, $j$, $k$ so that the monomial $x_i x_j x_k$ lies in $Q$.
    By ($\star$) and ($\star \star$), it suffices to verify condition (I.c)  of Theorem~\ref{thm:main} and that $d^{i j k}_r = 0$ for all $r$.
    
    For condition (I.c), both $\aconst i i j$ and $\aconst k j k$ are $0$, unless $i = j = 1$ or $j = k = 1$.
    In either of those cases, $x_i x_k \in R$, so condition (I.c) holds vacuously.
    
    On the other hand, in both the sums defining $d^{i j k}_r$, the only way any term is nonzero is if $i = j = k = r = 1$.
    In this case, we get $d^{1 1 1}_1 = ({\aconst 1 1 1})^2 - ({\aconst 1 1 1})^2 = 0$.
    Thus, $d^{i j k}_r = 0$ for all $r$, as required.

          \medskip
       
    \noindent {\bf \underline{Case 2}}:
    Suppose $x_\ell ^2 \not \in R$ for any $\ell \in \{1,2, \ldots, n\}$, but that $x_s x_t, x_t x_s \in R$ for some $s, t \in \{1,2,\ldots,n\}$ with $s \neq t$.
   We take $s = 1$ and $t = 2$ by reindexing.
    For any $u \in \{1, 2, \ldots, n\}$, set
    \begin{gather*}
     \mc L_{u} := \{i \in \{1,2, \ldots, n\} : x_i x_{u} \in R \} \quad \text{and} \quad \mc R_{u} := \{k \in \{1,2, \ldots, n\} : x_{u} x_k \in R \}.
    \end{gather*}
    So $x_i x_{u} x_k \in Q$ if and only if $i \in \mc L_{u}$ and $k \in \mc R_{u}$.   Let all filtrations parameters of $A$ be zero except the following:
    \begin{itemize}
      \item
        if $p \in \mc L_1$, set $\aconst p p 1 = 1$;
      \item
        if $q \in \mc R_1$, set $\aconst q 1 q = 1$;
      \item
        if $p \in \mc L_2$, set $\aconst p p 2 = 1$;
      \item
        if $q \in \mc R_2$, set $\aconst q 2 q = 1$; and
      \item
        set $\bconst 1 2 = \bconst 2 1 = -1$.
    \end{itemize}
    We will show that these are deformation parameters of $A$.
    
    Again,  fix $i$, $j$, $k$ so that the monomial $x_i x_j x_k$ lies in $Q$;
    it suffices to verify conditions (I.c), (II), (III)  of Theorem~\ref{thm:main}, due to  ($\star$).

    For condition~(I.c), suppose that $x_i x_k \notin R$.
    Then we have $\aconst i i j = \aconst k j k = 0$ unless $j \in \{1,2\}$.
    However, if $j \in \{1,2\}$, then $\aconst i i j = \aconst k j k = 1$, as required for  condition~(I.c).
    
    To verify condition~(II) of  Theorem~\ref{thm:main}, we consider whether $i$, $j$, and $k$ are in the set $\{1,2\}$.
    \medskip
    
   \noindent \underline{For (II): None of the indices $i$, $j$, $k$ are in  $\{1,2\}$.} ~ Then $d^{i j k}_r = 0$ since all $\aconst m i j$ and $\aconst {m'} j k$ are zero.
    Since $\bconst i j = \bconst j k = 0$ in this case, condition~(II) is satisfied; see ($\star \star$).
    
    \medskip
    
   \noindent \underline{For (II): Only one of the indices $i$, $j$, $k$ is in  $\{1,2\}$.} ~
Say $i \in \{1,2\}$, but $j,k \not \in \{1,2\}$.  Then $\aconst m i j = 0$ unless $m = j$. Also, $\aconst {m'} j k = 0$ for any $m'$.
    Thus, $d^{i j k}_r = \aconst j i j \aconst r j k = 0$.
    Since $\bconst i j = \bconst j k = 0$, condition~(II) is satisfied;  see ($\star \star$).
    
 If  $k \in \{1,2\}$, but $i,j \not \in \{1,2\}$, then condition (II) is verified by an argument symmetric to that above.
     
    Say $j \in \{1,2\}$, but $i,k \not \in \{1,2\}$. Then $\aconst m i j = 0$ unless $m = i$. Also, $\aconst {m'} j k = 0$ unless $m' = k$. 
Now if $i \in \mc L_k$, then $d^{i j k}_r = \aconst i i j \aconst r i k - \aconst r i k \aconst k j k$, which is 0; this holds as  $\aconst r i k = 0$ for all $r$.
 On the other hand, if $i \not \in \mc L_k$, then $d^{i j k}_r = 0$; this holds as  $\aconst m i j = 0$ for any $m \in \mc L_k$ and as $\aconst {m'} j k = 0$ for any $m' \in \mc R_i$. 
Therefore, condition~(II) is satisfied since $\bconst i j = \bconst j k = 0$; see ($\star \star$).
    
      \medskip
    
   \noindent \underline{For (II): Two of the indices $i$, $j$, $k$ are in  $\{1,2\}$.}
~ Now suppose that $i = 1$, $j = 2$, and $k \not \in \{1,2\}$. Then $\aconst m 1 2 \aconst r m k = 0$ unless $m \in \{1,2\}$ and $r = k$.
    Similarly, $\aconst r 1 {m'} \aconst {m'} 2 k = 0$ unless $r = m' = k$.
    Thus, for $r \neq k$, we get $d^{1 2 k}_r  = 0$.
    Now, if $k \in \mc R_1$, we have $d^{1 2 k}_k =  \aconst 1 1 2 \aconst k 1 k + \aconst 2 1 2 \aconst k 2 k - \aconst k 1 k \aconst k 2 k = 1$.
    If $k \not \in \mc R_1$, then we still have $d^{1 2 k}_k  = \aconst 2 1 2 \aconst k 2 k = 1$.
    Since $\bconst 1 2  = -1$ and $\bconst 2 k = 0$, condition~(II) holds.
    
   Now suppose that $i = 2$, $j = 1$, and $k \not \in \{1,2\}$; then condition~(II) is verified by an argument symmetric to that above. 
   
   Likewise,  condition (II) holds for the cases when $j,k \in \{1,2\}$ with $j \neq k$ and $i \notin \{1,2\}$ by an argument symmetric to that above.

Finally suppose that $i, k \in \{1,2\}$.
    Then $j \notin \{1,2\}$, so $\aconst m i j \aconst r m k = 0$ unless ${j = m = r}$.
    Similarly, $\aconst r i {m'} \aconst {m'} j k = 0$ unless $j = m' = r$.
    Therefore, for $r \neq j$, we get $d^{i j k}_r = 0$, and on the other hand, $d^{i j k}_j = \aconst j i j \aconst j j k - \aconst j i j \aconst j j k = 0$.
    Since $\bconst i j = \bconst j k = 0$, condition (II) is satisfied.

     \medskip
    
     \noindent \underline{For (II): All of the indices $i$, $j$, $k$ are in  $\{1,2\}$.}~ Here, we have that $(i,j,k)$ = (1,2,1) or (2,1,2) by the assumption that $x_i^2 \not \in R$.
    In order for $\aconst m i j \aconst r m k$ to be nonzero, we must have $m = j$ and $r \in \{1,2\}$.
    Similarly, for $\aconst r i {m'} \aconst {m'} j k$ to be nonzero, we must have $m' = j$ and $r \in \{1,2\}$.
    Therefore, for $r \notin \{1,2\}$, we get $d^{i j k}_r = 0$.
    On the other hand, for $r \in \{1,2\}$, we have $d^{i j k}_r = \aconst j i j \aconst r j k - \aconst r i j \aconst j j k = 0$.
    Since $\bconst i j = \bconst j i$, condition~(II) is satisfied.
	
	  \medskip
    Thus in Case 2, Theorem \ref{thm:main}(II) holds for our choice of filtration parameters.
    \medskip
        
    For condition~(III) of Theorem~\ref{thm:main}, recall that $\bconst i j = 0$ except for $\bconst 1 2 = \bconst 2 1 = -1$.
    Thus, condition~(III) is satisfied trivially if $i,k \not \in \{1,2\}$.
    
    If $i = 1$, but $k \not \in \{1,2\}$, then the sum which must be zero reduces to $-(\bconst 1 2 \aconst 2 j k)$; this is indeed zero because $\aconst 2 j k = 0$.
    Similarly, if $i = 2$, but $k \not \in \{1,2\}$, condition~(III) is satisfied. The same holds if $k \in \{1,2\}$, but $i \notin  \{1,2\}$.
    
    If $i = k = 1$, then the sum in (III) reduces to $\aconst 2 1 j \bconst 2 1 - \bconst 1 2 \aconst 2 j 1$.
    This sum is zero if $j \neq 2$, because $\aconst 2 1 j = \aconst 2 j 1 = 0$ in that case.
    However, if $j = 2$, the sum is also zero by our choice of  filtration parameters.
    Similarly, if $i = k = 2$, then (III) is satisfied.
    
    Finally, if $i = 1$ and $k = 2$, the sum reduces to $\aconst 1 1 j \bconst 1 2 - \bconst 1 2 \aconst 2 j 2$, which is zero because $\aconst 1 1 j = \aconst 2 j 2 = 0$, and similarly if $i = 2$ and $k = 1$. 
    
    \medskip
    Therefore, Theorem \ref{thm:main} is satisfied and our choice of filtration parameters yield a nontrivial PBW deformation of $A$ in this case.

       \medskip

      \noindent {\bf \underline{Case 3}}:
    Suppose neither of Cases 1 or 2 holds.
    Then with $\mc L_{u}$ and $\mc R_{u}$ as defined in Case~2, we obtain for each ${u} \in \{1,2, \ldots, n\}$ that the sets $\{{u}\}$, $\mc L_{u}$, and $\mc R_{u}$ are disjoint.
    We can choose some ${u}$ so that at least one of $\mc L_{u}$ or $R_{u}$ is nonempty, and 
    without loss of generality, we can assume that such ${u}$ is equal to 1 by reindexing.
    
 Let all filtration parameters of $A$ be zero except the following: 
    \begin{itemize}
      \item
        if $p \in \mc L_1$, set $\aconst p p 1 = 1$,
      \item
        if $q \in \mc R_1$, set $\aconst q 1 q = 1$.
    \end{itemize}
    
    If $Q$ is empty, then there is nothing to check, so any choice of filtration parameters would yield a deformation.
    (See Example~\ref{ex:RVVR empty}.)
    Otherwise, fix $i$, $j$, $k$ so that the monomial $x_i x_j x_k$ lies in $Q$. By ($\star$) and ($\star \star$), we only need to check that condition~(I.c) of Theorem~\ref{thm:main} holds, and that $d^{i j k}_r = 0$ for all $r$. 
   
   If none of $i$, $j$, or $k$ are equal to 1, the requirements above are satisfied because all parameters involved are set to zero.

    Suppose that $i = 1$.
    Then $j \in \mc R_1$ and $k \neq 1$, so condition (I.c) holds because $\aconst 1 1 j = \aconst k j k = 0$.
    Also, for any $m, m',r$, the products $\aconst m 1 j \aconst r m k$ and $\aconst {m'} j k \aconst r 1 {m'}$ are 0:
    For the first, $\aconst m 1 j = 0$ unless $m = j$, but in that case, $\aconst r m k = 0$ since neither $m (= j)$ nor $k$ is 1.
    For the second, $\aconst {m'} j k = 0$ since neither $j$ nor $k$ is 1.
    Therefore, $d_r^{1 j k} = 0$ for any $r$.
    
    The case when $k = 1$ follows in a symmetric fashion to the $i=1$ case.
    
    Now suppose that $j=1$.
   Here, we get $i \in \mc L_1$ and $k \in \mc R_1$, and that $i,k \neq 1$.
    By definition, $\aconst i i 1 = \aconst k 1 k = 1$, so condition~(I.c) is satisfied. Towards showing that $d^{i 1 k}_r = 0$ for all $r$, we have $\aconst m i 1 \aconst r m k = 0$ unless $m = i$, in which case $\aconst r m k = 0$. Thus, $\aconst m i 1 \aconst r m k = 0$.
    Similarly, $\aconst r i {m'} \aconst {m'} 1 k = 0$ unless $m' = k$, in which case $\aconst r i {m'} = 0$. So, $\aconst r i {m'} \aconst {m'} 1 k = 0$.
    Therefore, $d_r^{i 1 k} = 0 $ for all $r$, as desired.
  \end{proof}

\section*{Acknowledgments}

The authors thank the anonymous referee for their careful remarks which improved the exposition of this manuscript.
The authors are partially supported by the third author's grants from the National Science Foundation grant (\#DMS-1663775) and the Alfred P. Sloan foundation (research fellowship). Estornell and Wynne were also partially supported by Temple University's Undergraduate Research Program (URP).


\bibliography{PBW-quadratic-monomial}

\end{document}